\newtheorem{theorem}{Theorem}[section]
\newtheorem{corollary}[theorem]{Corollary}
\newtheorem{proposition}[theorem]{Proposition}
\theoremstyle{definition}
\newtheorem{definition}[theorem]{Definition}
\newtheorem{remark}[theorem]{Remark}
\definecolor{purple}{rgb}{0.75, 0, 0.75}
\begin{document}
\title[Factoring onto $\mathbb{Z}^d$ subshifts with the fin. ext. property]{Factoring onto $\mathbb{Z}^d$ subshifts with the finite extension property}

\begin{abstract}
We define the finite extension property for $d$-di\-men\-sio\-nal subshifts, which generalizes the topological strong spatial mixing condition defined in \cite{briceno}, and we prove that this property is invariant under topological conjugacy. Moreover, we prove that for every $d$, every $d$-dimensional block gluing subshift factors onto every $d$-dimensional SFT with strictly lower entropy, a fixed point, and the finite extension property. This result extends a theorem from \cite{BPS}, which requires that the factor contain a safe symbol.
\end{abstract}

\date{}
\author{Raimundo Brice\~{n}o, Kevin McGoff, and Ronnie Pavlov}
\address{Raimundo Brice\~{n}o\\
School of Mathematical Sciences\\
Tel Aviv University\\
Tel Aviv 69978, Israel}
\email{raimundob@mail.tau.ac.il}
\urladdr{https://www.math.tau.ac.il/~raimundob/}
\address{Kevin McGoff\\
Department of Mathematics\\
University of North Carolina at Charlotte\\
Charlotte, NC 28223}
\email{kmcgoff1@uncc.edu}
\urladdr{https://clas-math.uncc.edu/kevin-mcgoff/}
\address{Ronnie Pavlov\\
Department of Mathematics\\
University of Denver\\
2390 S. York St.\\
Denver, CO 80208}
\email{rpavlov@du.edu}
\urladdr{http://www.math.du.edu/$\sim$rpavlov/}
\thanks{The first author acknowledges the support of ERC Starting Grants 678520 and 676970. The second author acknowledges the support of NSF grant DMS-1613261. The third author acknowledges the support of NSF grant DMS-1500685.}
\keywords{$\mathbb{Z}^d$; shift of finite type; block gluing; factor map}
\renewcommand{\subjclassname}{MSC 2010}
\subjclass[2010]{Primary: 37B50; Secondary: 37B10, 37A35}
\maketitle


\section{Introduction}\label{intro}

A long-standing problem in the study of topological dynamical systems is the conjugacy problem, i.e., the problem of determining whether two dynamical systems which appear different actually exhibit the same dynamical behavior. A related problem is to determine when a topological dynamical system factors onto another one, i.e., when there is a surjective continuous map from the first to the second which intertwines their actions. Such maps are called (topological) factor maps, and they have been widely studied. We focus on these problems in the context of symbolic dynamical systems, also called subshifts.

For any natural number $d$ and finite set $\mathcal{A}$ (given the discrete topology), a $\mathbb{Z}^d$ subshift is any closed subset (with respect to the product topology) of $\mathcal{A}^{\mathbb{Z}^d}$ which is invariant under every translation $\sigma_t$ by a vector $t \in \mathbb{Z}^d$. 
We often refer to a subshift by the set $X$, with the understanding that the dynamics are always provided by the restriction of $\sigma$ to $X$. Examples of easily defined subshifts are the so-called $\mathbb{Z}^d$ shifts of finite type (or $\mathbb{Z}^d$ SFTs): for any finite set $\mathcal{F}$ of finite patterns, $X(\mathcal{F})$ is defined as the set of all elements of $\mathcal{A}^{\mathbb{Z}^d}$ which do not contain any pattern in $\mathcal{F}$. A special case is $X(\mathcal{\varnothing}) = \mathcal{A}^{\mathbb{Z}^d}$, called the full shift. 

There are two well-known necessary conditions for the existence of a factor map $\phi$ from $X$ onto $Y$. First, note that if $\sigma_t(x) = x$ for some $x \in X$ and $t \in \mathbb{Z}^d$, then $\sigma_t(\phi(x)) = \phi(x)$.   
Thus, $X$ and $Y$ must satisfy Condition (P): for every $x \in X$, there exists $y \in Y$ such that if $\sigma_t(x) = x$, then $\sigma_t(y) = y$.
Note that this condition is always satisfied when $Y$ contains a fixed point, i.e. $y \in Y$ where $\sigma_t(y) = y$ for all $t \in \mathbb{Z}^d$.
Second, the topological entropy of a $\mathbb{Z}^d$ subshift $X$ 
(denoted by $h(X)$; see Section~\ref{defs} for the definition) cannot increase under a factor map, and so $h(X) \geq h(Y)$ must hold. Surprisingly, for restricted classes of subshifts, these necessary conditions also seem to be nearly sufficient. (A stronger form of the following theorem appears in \cite{boyle}.) 

\begin{theorem}[\cite{boyle}] 
For mixing $\mathbb{Z}$ SFTs $X$ and $Y$ with $h(X) > h(Y)$, there exists a factor map from $X$ onto $Y$ if and only if $X$ and $Y$ satisfy Condition (P). 
\end{theorem}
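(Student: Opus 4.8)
Throughout, the ``only if'' direction is exactly the discussion preceding the statement: a factor map cannot increase topological entropy, so $h(X)\ge h(Y)$, and if $\phi\colon X\to Y$ is a factor map then $y=\phi(x)$ witnesses Condition (P) for each $x$, since $\sigma_t x=x$ forces $\sigma_t y=y$. So all the content is in the ``if'' direction, and the plan is a marker-and-filling construction in the style of Krieger and Boyle: cut a generic point of $X$ into long blocks by a clopen marker set, use the entropy surplus to freely encode a point of $Y$ on each block (reserving a bounded suffix for a transition word so the blocks concatenate legally), and treat the finitely many exceptional short-period orbits separately using Condition (P).

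First I would fix the markers. Since $X$ is a mixing $\mathbb{Z}$ SFT, for every large $N$ there is a clopen ``marker set'' $F\subseteq X$ such that $F,\sigma F,\dots,\sigma^{N}F$ are pairwise disjoint while $X\setminus\bigcup_{i\in\mathbb{Z}}\sigma^iF$ is a finite union of periodic orbits of period at most $N$; membership in $F$ depends only on a bounded central window, and for every non-exceptional $x$ consecutive occurrences of $F$ along the orbit of $x$ are at distance between $N$ and $2N$. This cuts each non-exceptional $x$ into consecutive blocks of lengths in $[N,2N]$. Choose $N$ large compared with the memory of $Y$, the mixing length of $Y$, and $1/(h(X)-h(Y))$.

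Next, the coding on a generic block. For a length $\ell\in[N,2N]$ the counting estimate
\[
\big(\text{number of marked }X\text{-blocks of length }\ell\big)\ \ge\ 2^{(h(X)-\varepsilon)\ell}\ >\ 2^{(h(Y)+\varepsilon)\ell}\ \ge\ \big(\text{number of }Y\text{-words of length }\ell\big)
\]
holds for $N$ large, and survives reserving a fixed-length suffix of each block's output to carry a transition word of $Y$ (which exists with bounded length because $Y$ is a mixing SFT). I would then fix, for each admissible block shape, a surjection from the set of its $X$-blocks onto a sufficiently rich family of $Y$-words, arranged so that (i) the assigned $Y$-word depends only on the $X$-block plus a bounded amount of interface data from its neighbors, so that $\phi$ is a sliding block code, hence continuous and $\sigma$-equivariant, and (ii) outputs of consecutive blocks concatenate into a legal $Y$-point via the reserved transition suffix. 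Surjectivity of $\phi$ is then obtained by running the construction in reverse: given $y\in Y$, impose any legal marker pattern on a would-be preimage, which cuts $y$ into blocks of lengths in $[N,2N]$; because each block map is surjective (onto a family containing these pieces) and there is analogous gluing freedom on the $X$-side, one fills in an $X$-point block by block mapping onto $y$.

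The last step, and the main obstacle, is the finite set of exceptional periodic orbits carrying no marker, where the block construction is vacuous and the value forced on $\phi$ by continuity from nearby generic points need not be $\sigma$-periodic of the correct period. Here Condition (P) enters: for each short-period $x$ choose $y_x\in Y$ with $\mathrm{Stab}(x)\subseteq\mathrm{Stab}(y_x)$, redefine $\phi$ on a small clopen neighborhood of that orbit so that the orbit itself maps to $y_x$ (consistently, since these orbits are finitely many and topologically isolated within the unmarked set), and absorb the discrepancy between this prescribed rule and the generic rule in a buffer region just outside the neighborhood, again using mixing of $X$ and of $Y$ together with the entropy surplus to splice in the needed transition words. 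The delicate point is not any single estimate but the simultaneous bookkeeping: one must verify that all these local rules patch into a single sliding block code that is at once continuous, $\sigma$-equivariant, surjective onto $Y$, and correct on every periodic point — in particular that the chosen marker words are recognizable (no spurious markers are created by the fillings), so that the intended block structure is the actual one. I expect this coordination to be the hard part, and Condition (P) is precisely what removes the only obstruction to it.
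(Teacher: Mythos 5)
The paper does not prove this theorem; it is quoted from \cite{boyle} as known background, so there is no internal proof to compare against. Judged on its own, your sketch follows the standard Krieger--Boyle marker strategy, and the generic-block part (marker lemma, entropy counting, reserved transition suffixes, reverse construction for surjectivity) is the right skeleton. But the step you yourself flag as ``the hard part'' --- the exceptional low-period orbits --- is where the proof actually lives, and your proposed fix does not work as stated. You cannot ``redefine $\phi$ on a small clopen neighborhood of that orbit'': a factor map between subshifts is a sliding block code, so its value at the origin is a function of a bounded central window, and there are points of $X$ that agree with a period-$p$ point on an arbitrarily long central window before eventually diverging. Any bounded-window rule that sends the periodic orbit to $y_x$ must therefore also emit long stretches of the periodic word of $y_x$ on these nearby aperiodic points, and the ``buffer region just outside the neighborhood'' where you absorb the discrepancy is not a bounded region in coordinates --- it sits at the unbounded and $x$-dependent location where the point stops looking periodic. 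Making this consistent (the periodic-looking core must be coded by a rule that meshes, via transitions of bounded length in $Y$, with the generic marker coding on both sides, for every possible length of the periodic-looking stretch, and simultaneously for all low periods) is precisely the content of Boyle's argument, and Condition (P) alone does not hand it to you; one also needs that the chosen $y_x$ can be approached and left inside $Y$ by words compatible with the generic code, which again uses mixing of $Y$ in an essential, quantitative way.

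A secondary gap: your surjectivity argument assumes that for each target $y\in Y$ one can realize any prescribed sequence of $Y$-words as the outputs of some legally concatenated sequence of marked $X$-blocks. This requires that the block-to-word surjections be chosen coherently with the gluing constraints of $X$ at the marker interfaces (the set of $X$-blocks that can legally follow a given block depends on that block), not merely that each surjection exists in isolation; the entropy count gives you enough blocks, but you must still argue that enough of them are mutually concatenable, which is where mixing of $X$ re-enters. Neither gap is a wrong idea so much as an unexecuted one, but as written the proposal is an outline of the known strategy rather than a proof.
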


When $d=1$ and $Y$ is a full shift, even the equal entropy case ({i.e., $h(X) = h(Y)$) has been solved. In this case, $Y$ automatically contains a fixed point, and so no additional periodic point hypothesis is necessary.

\begin{theorem}[\cite{boyle}, \cite{marcus}] 
For a $\mathbb{Z}$ SFT $X$ and a full shift $Y$ with $h(X) \geq h(Y)$, there exists a factor map from $X$ onto $Y$.
\end{theorem}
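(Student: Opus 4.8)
The plan is to strip off cheap reductions, reduce to the case where $X$ is irreducible, settle the strict-inequality case using the quoted theorem of Boyle, and isolate the equal-entropy case as the real content. First the normalizations: we may take $Y=\{1,\dots,n\}^{\mathbb Z}$ with $\log n\le h(X)$; if $n=1$ the constant map works, so assume $n\ge 2$ and hence $h(X)\ge\log 2>0$. Replacing $X$ by a conjugate one-step SFT changes neither $h(X)$ nor the existence of a factor onto $Y$ (precompose with the conjugacy), so assume $X$ is presented by a finite directed graph $G$. Let $X'\subseteq X$ be an irreducible component (a strongly connected component of $G$) with $h(X')=h(X)\ge\log n$; such a component exists since $h(X)$ equals the maximum of the entropies of the irreducible components. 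The key point of this reduction is that, because $Y$ is a full shift, any factor map $\psi\colon X'\to Y$ extends to a factor map $X\to Y$ for free: if $\psi$ is induced by a length-$m$ block map $\Psi$ on the words of $X'$, extend $\Psi$ to all length-$m$ words of $X$ by setting it equal to the symbol $1$ on every word not occurring in $X'$; the resulting sliding block code $\phi\colon X\to Y$ restricts to $\psi$ on $X'$, so $\phi(X)\supseteq\psi(X')=Y$. (There is no admissibility constraint in $Y$ to respect, which is exactly why this is free.) So it suffices to factor an irreducible SFT $X'$ with $h(X')\ge\log n$ onto $Y$.

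For the strict case $h(X')>\log n$: if $X'$ is mixing, then $Y$ is a mixing SFT containing the fixed point $1^{\infty}$, so Condition~(P) holds trivially and the quoted theorem of Boyle produces a factor map $X'\to Y$. If $X'$ has period $p>1$, pass to $\sigma^{p}$: the system $(X',\sigma^{p})$ is the disjoint union of the $p$ cyclic classes $C_0,\dots,C_{p-1}$, each a mixing SFT under $\sigma^{p}$, and $h(C_0,\sigma^{p})=p\,h(X')>p\log n=h\big((Y,\sigma^{p})\big)$, where $(Y,\sigma^{p})$ is itself a full shift (on $n^{p}$ symbols). Boyle's theorem applied to the $\sigma^{p}$-dynamics yields a factor map $\theta\colon(C_0,\sigma^{p})\to(Y,\sigma^{p})$. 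Since $G$ may be taken with its vertex set split into $p$ cyclically permuted classes, the cyclic-class function $c\colon X'\to\mathbb Z/p$ is locally constant and satisfies $c(\sigma x)=c(x)+1$; then $\psi(x):=\sigma^{c(x)}\theta\big(\sigma^{-c(x)}x\big)$ is continuous, $\sigma$-equivariant, and onto (for $y\in Y$ pick $z\in C_0$ with $\theta(z)=y$, so $\psi(z)=y$), hence a factor map by Curtis--Hedlund--Lyndon. Together with the extension step above, this disposes of the strict case.

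The equal-entropy case $h(X')=\log n$ is where I expect the real work to be, and it is the main obstacle: the reduction to $\sigma^{p}$ above becomes circular (it only reproduces an equal-entropy, full-shift-target instance) and Boyle's theorem no longer applies. Here one must invoke the state-splitting / finite-state coding theorem of Adler--Coppersmith--Hassner. The adjacency matrix $A$ of $G$ now has Perron eigenvalue exactly $n$, so there is an integer vector $w\ge 1$ with $Aw\ge nw$; the induced state splitting gives a presentation of a subsystem of $X'$ possessing a subgraph with out-degree exactly $n$ at every vertex, and a right-resolving labeling of its edges by $\{1,\dots,n\}$ defines a right-closing factor map onto the full $n$-shift (surjectivity because an irreducible, right-resolving graph with constant out-degree $n$ reads off every bi-infinite $\{1,\dots,n\}$-sequence: the image is a subshift whose language is all of $\{1,\dots,n\}^{*}$, hence the whole full shift). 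A final application of the free extension step then finishes the argument. The genuinely hard ingredient is this capacity-achieving state-splitting construction, since at equal entropy one can no longer afford to discard any entropy, and this is precisely what lies beyond the reach of the methods that handle the strict case.
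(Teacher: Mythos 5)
The paper states this theorem without proof, citing Boyle and Marcus, and your argument is exactly the intended decomposition: routine reductions (recoding to an edge shift, passing to a maximal-entropy irreducible component, the free extension of block maps into a full shift, and the period-$p$ reassembly $\psi(x)=\sigma^{c(x)}\theta(\sigma^{-c(x)}x)$), then Boyle's lower-entropy factor theorem for the strict case and the Marcus/Adler--Coppersmith--Hassner state-splitting theorem for the equal-entropy case. All the steps you supply are correct, and you rightly identify the capacity-achieving state splitting as the nontrivial imported ingredient, so this is essentially the same approach as the paper's (cited) one.
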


Unfortunately, the situation is much more complicated for $d > 1$. In particular, there are several different candidates for a proper extension of ``mixing'' to the multidimensional case. One commonly used condition is the block gluing condition defined in \cite{BPS}, and a much stronger one is the existence of a so-called safe symbol (definitions are given in Section~\ref{defs}). We do not attempt to summarize the entire literature on this topic, but here are a few representative results. First, the theorems for $\mathbb{Z}$ subshifts do not directly extend to $\mathbb{Z}^d$ subshifts when $d > 1$.

\begin{theorem}[\cite{BPS}] 
\label{theoremBPS1}
For every $d > 1$, there exist topologically mixing $\mathbb{Z}^d$ SFTs with arbitrarily high entropy which do not factor onto any nontrivial full shift.
\end{theorem}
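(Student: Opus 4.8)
\emph{Strategy.} Note first that a nontrivial full shift $Y=\mathcal{B}^{\mathbb{Z}^d}$ contains a fixed point and has finite entropy $\log|\mathcal{B}|$, so the two ``obvious'' obstructions discussed above — Condition~(P) and $h(X)\ge h(Y)$ — are automatically satisfied by \emph{any} subshift $X$ possessing a fixed point and sufficiently large entropy. The theorem must therefore be proved by exhibiting a subtler invariant. The plan is to isolate a property $\mathcal{P}$ enjoyed by every $\mathbb{Z}^d$ SFT that factors onto some nontrivial full shift (a strong ``interior-filling'' property: roughly, for every large box and every admissible configuration on its complement, the box can be completed so that the image realizes any prescribed pattern there), and then to construct, for each integer $N\ge 1$, a $\mathbb{Z}^d$ SFT $X_N$ which is topologically mixing, has a fixed point, satisfies $h(X_N)\to\infty$, yet violates $\mathcal{P}$. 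Taking $N$ large then gives mixing SFTs of arbitrarily high entropy that factor onto no nontrivial full shift.

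\emph{The construction.} I would build $X_N$ over an alphabet of the form $\mathcal{A}_N=\{\ast\}\sqcup(\mathcal{S}\times\{1,\dots,N\})$, where $\ast$ is a distinguished blank symbol, $\mathcal{S}$ is a small \emph{fixed} alphabet carrying a rigid planar constraint (the symbols needed to draw coherent, nested/aligned ``frames'' or a Robinson-type hierarchy of markers), and the $\{1,\dots,N\}$-coordinate is essentially free wherever it appears. The local rules are designed so that: (a) the all-$\ast$ configuration is a fixed point, and large regions of a configuration may be filled freely in the $\{1,\dots,N\}$-layer, which forces $h(X_N)\to\infty$ as $N\to\infty$; (b) distant patterns can be separated and joined through a surrounding sea in which the $\ast$-symbol and the slack in the marker layer let one continue a coherent skeleton across the gap, giving topological mixing (and, with care, block gluing); and (c) every configuration carries a globally coherent $\mathcal{S}$-skeleton that no bounded window can locally ``reset'' — so that, given the configuration on the complement of a large box $B$, the skeleton inside $B$ is pinned down up to only $O_N(1)$ possibilities, hence the box admits only boundedly many completions ``up to the free layer'' in a sense strong enough to contradict $\mathcal{P}$ once $|B|$ is large.

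\emph{Main obstacle.} The heart of the argument — and the bulk of the work — is step (c) together with the proof that it implies failure of $\mathcal{P}$: one must design a skeleton that is \emph{flexibly rigid}, i.e.\ present and globally coherent in every configuration yet carrying only bounded information per box and absorbing into the blank background with enough slack (variable spacing, no conserved residue) not to destroy mixing or entropy; rigidity and mixing are genuinely in tension, since a naively rigid global grid makes some coordinate conserved modulo a constant and kills mixing. I also expect the verification of topological mixing itself to require care, precisely because one must glue two arbitrary patterns across a gap \emph{while} threading a legal skeleton through it. Finally, turning ``the skeleton pins interiors to boundedly many possibilities'' into a clean contradiction with the existence of a factor map onto a full shift requires attention to the quantifier structure (surjectivity of $\phi$ gives every image point a preimage, but not that a \emph{fixed} exterior in $X$ realizes all interiors in $Y$); I would handle this by choosing the exterior configuration and the box carefully — e.g.\ working relative to the preimage of a fixed point of $Y$ — so that the skeleton constraint and a counting bound ($|\mathcal{B}|^{|B|}$ versus $O_N(1)$) collide.
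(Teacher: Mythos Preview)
This theorem is not proved in the present paper; it is quoted from \cite{BPS} as background and motivation, with no argument given here. There is therefore no proof in this paper to compare your proposal against.

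As a standalone sketch your plan is in a plausible spirit --- one wants an SFT carrying enough global rigidity that no sliding block code can hit every pattern of a full shift --- but it is a plan, not a proof, and the two places you yourself flag are genuine gaps. First, the tension between (b) and (c) is exactly where the difficulty lives: Robinson-type hierarchical skeletons are rigid enough to obstruct factoring but are famously \emph{not} topologically mixing, while skeletons loose enough to mix tend to surrender the rigidity you need for (c). You have not indicated a concrete mechanism that achieves both simultaneously. Second, your obstruction property $\mathcal{P}$ is not yet a theorem: surjectivity of $\phi:X\to\mathcal{B}^{\mathbb{Z}^d}$ gives no ``interior-filling'' statement relative to a fixed exterior configuration, and the counting collision you want (``$|\mathcal{B}|^{|B|}$ versus $O_N(1)$'') requires a precise lemma of the form ``for some admissible exterior $\eta$ and arbitrarily large boxes $B$, the completions of $\eta$ on $B$ have at least $|\mathcal{B}|^{c|B|}$ distinct $\phi$-images,'' which you have neither stated carefully nor proved. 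If you want to carry this through, you should consult \cite{BPS} directly for the actual construction and the actual invariant used.
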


\begin{theorem}[\cite{PS}]
\label{theoremPS}
For every $d \geq 3$ and every nontrivial $\mathbb{Z}^d$ full shift $Y$, there exists a block gluing $\mathbb{Z}^d$ SFT $X$ with $h(X) = h(Y)$ such that there is no factor map from $X$ onto $Y$.
\end{theorem}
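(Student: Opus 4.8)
The plan is to reduce the statement to a complexity estimate for the SFT $X$ that must be constructed, and then to build $X$. Write $L_S(Z)$ for the set of patterns of a subshift $Z$ on a finite $S\subseteq\mathbb{Z}^d$, and $n=|\mathcal{B}|$ with $Y=\mathcal{B}^{\mathbb{Z}^d}$. Since every factor map between subshifts is a sliding block code (Curtis--Hedlund--Lyndon), a factor map $\phi\colon X\to Y$ has some radius $r$, i.e.\ $\phi(x)_v$ depends only on $x|_{v+[-r,r]^d}$; surjectivity of $\phi$ then forces every pattern of $Y$ on a $k$-cube to be the image of a pattern of $X$ on the concentric $(k+2r)$-cube, so that
\[
\bigl|L_{[1,k]^d}(X)\bigr|\ \geq\ \bigl|L_{[1,k-2r]^d}(Y)\bigr|\ =\ n^{(k-2r)^d}\qquad(k\geq 2r+1).
\]
Setting $D(k):=k^d\log n-\log\bigl|L_{[1,k]^d}(X)\bigr|$, this reads $D(k)\leq\bigl(k^d-(k-2r)^d\bigr)\log n\leq 2dr\,k^{d-1}\log n$. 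Hence it suffices to construct a block gluing $\mathbb{Z}^d$ SFT $X$ with $h(X)=\log n$ and $\limsup_k D(k)/k^{d-1}=\infty$ (say $D(k)\geq k^{d-1}\log k$ infinitely often): for each fixed $r$ the displayed bound then fails at infinitely many scales, so no sliding block code of any radius --- hence no factor map --- carries $X$ onto $Y$. Note that $D(k)=o(k^d)$ is automatic once $h(X)=\log n$, so the content is the lower bound on $D$.

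Thus the task is, for $d\geq3$, to build a block gluing $\mathbb{Z}^d$ SFT of entropy \emph{exactly} $\log n$ whose cube complexity, while of maximal exponential order, falls short of $n^{k^d}$ by a super-surface-order amount along infinitely many scales. I would attempt this by forcing into every configuration a ``floating'' auxiliary \emph{defect structure} that is (i) pinned down by finitely many local rules; (ii) of zero density, so that constraining the configuration on it subtracts only $o(k^d)$ from the log-count in a $k$-cube and keeps $h(X)=\log n$; but (iii) of super-surface mass $\omega(k^{d-1})$ in a $k$-cube, so that restricting the bulk $\mathcal{B}$-valued data on it (say to a subalphabet, or by a local reflection rule across defect walls) costs $\omega(k^{d-1})$ independent choices; and (iv) loose enough that any two cube patterns of $X$ still coexist in a point of $X$ at bounded distance. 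The natural tool for (i)--(iii) is a self-similar, Robinson-type hierarchical skeleton laid out in two of the coordinate directions, with the entropy cost at hierarchy level $j$ (scale $s_j$) tuned so that summing over the $O(\log k)$ levels visible in a $k$-cube yields $\omega(k^{d-1})$ but $o(k^d)$; the remaining $d-2\geq1$ coordinate directions, available exactly because $d\geq3$, provide essentially free ``slack'' layers that one uses to establish block gluing by filling the gap between two distant cube patterns slice by slice.

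The main obstacle is the tension between (iii) and (iv), compounded by the density constraint in (ii). Block gluing is a strong flexibility hypothesis --- it lets one tile a large cube with independently-filled sub-cubes, which tends to drive $D(k)$ back down to surface order --- while a genuinely self-similar skeleton carries \emph{positive} density and would pull $h(X)$ strictly below $\log n$; so the construction must produce a defect that is at once of density zero, of super-surface mass in every large cube, enforceable locally, and transparent to gluing. I expect the bulk of the work to be (a) an entropy accounting along the hierarchy that proves $\limsup_k D(k)/k^{d-1}=\infty$, and (b) a gluing lemma that uses the extra coordinate direction(s) to absorb the mismatch between two far-apart patterns and their ambient skeletons without disturbing the count in (a).

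If the bookkeeping in (a) turns out to be intractable --- or if block gluing should in fact force $D(k)=O(k^{d-1})$, closing this route --- one can instead aim for a measure-theoretic obstruction. A factor map $\phi\colon X\to Y$ onto the full shift with $h(X)=h(Y)=\log n$ lifts the uniform Bernoulli measure $\mu_Y$ to a shift-invariant $\nu$ on $X$ with $\phi_*\nu=\mu_Y$, and then $h(\nu)\geq h(\mu_Y)=\log n= h(X)$, so $\nu$ is a measure of maximal entropy of $X$ and the extension $(X,\nu)\to(Y,\mu_Y)$ has zero fiber entropy. It would then suffice to build a block gluing SFT $X$ with $h(X)=\log n$ such that no measure of maximal entropy of $X$ is a zero-fiber-entropy extension of the uniform Bernoulli shift on $n$ symbols --- e.g.\ by a similar hierarchical scheme engineered so that every measure of maximal entropy of $X$ carries a nontrivial, positive-fiber-entropy ``defect process'' over any factor onto $Y$.
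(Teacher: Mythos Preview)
This theorem is not proved in the paper at all; it is quoted from \cite{PS} as background for the introduction, so there is no proof here against which to compare your attempt.

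Evaluating your proposal on its own merits: the reduction in your first paragraph is correct and is the standard obstruction. If $\phi\colon X\to Y$ is a radius-$r$ sliding block code onto the full $n$-shift, then $|L_{[1,k]^d}(X)|\geq n^{(k-2r)^d}$, and hence $D(k):=k^d\log n-\log|L_{[1,k]^d}(X)|=O(k^{d-1})$ with constant depending on $r$. So it does suffice to build a block gluing $\mathbb{Z}^d$ SFT with $h(X)=\log n$ and $\limsup_k D(k)/k^{d-1}=\infty$.

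What follows, however, is a research plan rather than a proof. You sketch a plausible architecture --- a density-zero hierarchical defect living in two coordinate directions, with the remaining $d-2\geq 1$ directions supplying slack for gluing --- and you correctly isolate the central difficulty: block gluing lets one tile a large cube by independently chosen sub-cube patterns, which pushes $D(k)$ back toward surface order, while you need $D(k)$ to exceed surface order along a sequence of scales. You do not resolve this tension; you say you ``expect the bulk of the work'' to lie in the entropy accounting and the gluing lemma, and you offer a measure-theoretic fallback in case the combinatorics fail. That is an honest diagnosis of where the content is, but it leaves the theorem unproved. In particular, step (iv) --- showing that a locally enforceable, density-zero skeleton can simultaneously be block gluing and force a super-surface-order complexity deficit --- is precisely the substance of the cited result, and nothing in your write-up carries it out. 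The heuristic that the extra coordinate directions ``provide essentially free slack layers'' is suggestive, and indeed the hypothesis $d\geq 3$ is used in exactly this way in \cite{PS}, but turning it into an actual construction with verified local rules, an entropy computation, and a gluing argument is the entire paper you are citing.
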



Under a strict entropy inequality, the block gluing hypothesis, which allowed for the negative examples of Theorem~\ref{theoremPS}, implies a positive result for $d > 1$ even for general subshifts.



\begin{theorem}[\cite{BPS}]
\label{theoremBPS2}
If $X$ is a block gluing $\mathbb{Z}^d$ subshift, $Y$ is a $\mathbb{Z}^d$ SFT with a safe symbol, and $h(X) > h(Y)$, then there exists a factor map from $X$ onto $Y$.
\end{theorem}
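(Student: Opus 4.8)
The plan is to realize $\phi$ as a composition $\phi = \pi \circ \psi$ of two sliding block codes, where $\psi$ encodes information from $x$ into a configuration over the alphabet $\mathcal{A}$ of $Y$, and $\pi$ is a safe-symbol ``repair'' map whose only job is to push the output into $Y$. The geometric heart of the construction is an equivariant grid. Let $g$ be a block-gluing constant for $X$. Since $h(X) > h(Y) \ge 0$ we have $h(X) > 0$, so by marker-type constructions one builds a $\sigma$-equivariant, locally determined (radius $\rho$) map $x \mapsto \Gamma(x)$ decomposing $\mathbb{Z}^d$ into rectangular cells with all side lengths in a fixed window $[N, N']$ (the scale $N$ to be chosen below), with distinct cells at distance at least $g$. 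For a cell $C$, let $C^{\circ}$ be $C$ eroded by a collar of fixed width $s$ (chosen larger than both $\rho$ and $g$) and $\widetilde{C}$ be $C$ dilated by a collar wide enough that dilations of neighbouring cells meet inside the separating gap, so that $\bigcup_C \widetilde{C} = \mathbb{Z}^d$ and distinct eroded cells are more than $g$ apart.

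Since $h(X) > h(Y)$, fix $\varepsilon > 0$ with $(h(X) - \varepsilon)/(h(Y) + \varepsilon) > 1$. Because $|\widetilde{C}|/|C^{\circ}| \to 1$ uniformly over the bounded family of cell shapes as $N \to \infty$, for $N$ large every cell shape $C$ admits an injection from the globally admissible $Y$-patterns on $\widetilde{C}$ (at most $2^{(h(Y) + \varepsilon)|\widetilde{C}|}$ of them) into the globally admissible $X$-patterns on $C^{\circ}$ (at least $2^{(h(X) - \varepsilon)|C^{\circ}|}$ of them); fix such an $N$, and for each shape fix such an injection $\iota_C$, letting it act on arbitrarily positioned cells by translation. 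Define $\psi(x)$ as follows: for each cell $C$ of $\Gamma(x)$ with $\widetilde{C} \ni v$ and $x|_{C^{\circ}}$ in the image of $\iota_C$, record the candidate $\iota_C^{-1}(x|_{C^{\circ}})$ at $v$; set $\psi(x)_v$ equal to this value if all cells contributing a candidate at $v$ agree on it, and equal to the safe symbol $\star$ otherwise. Since $\Gamma$ is locally determined and cells have bounded size, $\psi$ is a sliding block code with $\psi(x) \in \mathcal{A}^{\mathbb{Z}^d}$.

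Re-present $Y$ by a finite set $\mathcal{F}$ of forbidden patterns none of which contains $\star$ (possible because $\star$ is safe) and let $r$ bound their diameters. Define $\pi(z)_v = z_v$ if $z$ restricted to $v + [-r, r]^d$ contains no translate of a pattern from $\mathcal{F}$, and $\pi(z)_v = \star$ otherwise. If $\pi(z)$ contained a translate of some $F \in \mathcal{F}$ on a set $S$, then, as $\star \notin F$, every $u \in S$ would be a ``kept'' site, so $z|_S = F$; but then $S \subseteq u + [-r,r]^d$ would force $\pi(z)_u = \star$, a contradiction. Hence $\pi(z) \in Y$ for every $z \in \mathcal{A}^{\mathbb{Z}^d}$, and clearly $\pi$ is a sliding block code fixing $Y$ pointwise. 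Put $\phi = \pi \circ \psi$ (a sliding block code, being a composition of two), so $\phi(X) \subseteq Y$. For surjectivity, fix $y \in Y$: starting from an arbitrary $x_0 \in X$, read off the cells of $\Gamma(x_0)$ and successively re-fill each eroded interior $C^{\circ}$ with the prescribed pattern $\iota_C(y|_{\widetilde{C}})$; by block gluing this keeps us in $X$, while — since the eroded interiors lie well inside the cells — it leaves the markers near cell boundaries, hence $\Gamma$, unchanged, and passing to the limit produces $x \in X$ with $\Gamma(x) = \Gamma(x_0)$ and $x|_{C^{\circ}} = \iota_C(y|_{\widetilde{C}})$ for every cell $C$. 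For such $x$ every candidate computed by $\psi$ at any site $v$ equals $y_v$, so $\psi(x) = y$ and $\phi(x) = \pi(y) = y$.

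I expect the main obstacle to be the grid step together with the realization used for surjectivity: constructing in dimension $d > 1$ an honestly $\sigma$-equivariant, locally recognizable decomposition of $\mathbb{Z}^d$ into uniformly large rectangular cells with controlled gaps, robust enough that the block-gluing re-filling of cell interiors leaves the decomposition intact (this last point may require the standard strengthenings of block gluing rather than its bare two-box form). Once that scaffolding is in place, the entropy bookkeeping, the safe-symbol repair, and the verification that $\psi$ and $\pi$ are sliding block codes are all routine.
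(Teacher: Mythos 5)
There is a genuine gap, and it sits exactly where you suspected: the equivariant grid $\Gamma$. A $\sigma$-equivariant, locally determined (radius $\rho$) assignment $x \mapsto \Gamma(x)$ that partitions $\mathbb{Z}^d$ into cells with all side lengths at least $N \geq 2$, for \emph{every} $x \in X$, cannot exist for a general block gluing $X$. Two obstructions: (i) periodic points --- the full shift is block gluing and contains a fixed point $x$; equivariance forces $\Gamma(\sigma_t x) = \Gamma(x)$ shifted by $t$ for all $t$, so the cell decomposition of a fixed point would have to be invariant under every translation, which is impossible for cells of size $\geq 2$; the same problem occurs for any period vector shorter than $N$. (ii) Marker-free points --- any local rule for placing cell boundaries must be anchored to occurrences of some finite pattern, but a block gluing subshift always contains points (indeed a positive-entropy subshift of them) omitting any fixed pattern, and on such points the rule has nothing to anchor to. Since the theorem must apply with $X$ a full shift, the construction fails at its first step for instances squarely covered by the statement. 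This is precisely why the proof in \cite{BPS} (and the proof of Theorem~\ref{mainthm} here, which generalizes it) does \emph{not} partition $\mathbb{Z}^d$: it detects ``surrounding frame'' patterns wherever they happen to occur in $x$ (possibly nowhere), defines the image only on the resulting determined zones, sends everything else to the safe symbol, and recovers surjectivity by restricting attention to the special points of $X$ that do contain a full lattice of frames. Your surjectivity argument already implicitly lives in that world; the error is in demanding that the scaffolding exist globally for all $x$.

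A secondary issue: refilling $C^{\circ}$ with arbitrary high-entropy $X$-patterns can create new occurrences of the marker inside $C^{\circ}$ and thereby change $\Gamma$; the standard fix (used in the paper) is to encode using $L(X_P)$, the points of $X$ avoiding the marker core $P$, for which $h(X_P) > h(Y)$ still holds. On the positive side, your repair map $\pi$ is correct and cleanly isolates what the safe symbol buys: after re-presenting $Y$ by $\star$-free forbidden patterns, $\pi$ is a sliding block retraction of the full shift onto $Y$ fixing $Y$ pointwise, and your verification that $\pi(z) \in Y$ is sound. The entropy bookkeeping ($|\widetilde{C}|/|C^{\circ}| \to 1$ plus $h(X) > h(Y)$ giving the injections $\iota_C$) is also fine. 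So the encoding and repair layers are salvageable; the proof needs the frame-detection architecture in place of the global grid.
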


We also note that the safe symbol hypothesis in Theorem \ref{theoremBPS2} is very restrictive, and is not at all invariant under topological conjugacy.



In this work, we define a new condition called the finite extension property, which is significantly weaker than the existence of a safe symbol. We prove that this condition is conjugacy-invariant, and then we prove the following main result.

\begin{theorem}\label{mainthm}
If $X$ is a block gluing $\mathbb{Z}^d$ subshift, $Y$ is a $\mathbb{Z}^d$ SFT with a fixed point and the finite extension property, and $h(X) > h(Y)$, then there exists a factor map from $X$ onto $Y$.
\end{theorem}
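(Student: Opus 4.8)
My plan is to reduce this to Theorem \ref{theoremBPS2} by way of the conjugacy-invariance of the finite extension property, but since $Y$ may not literally have a safe symbol, the reduction cannot be to $Y$ itself; instead I would build an auxiliary SFT that "sits between" $X$ and $Y$. Let me sketch the structure.

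First, I would use the finite extension property directly. Roughly, the finite extension property should say that there is some radius $r$ so that any globally admissible pattern on the boundary-annulus of a region can be filled in to a valid configuration of $Y$ on the whole region — this is the weakening of "safe symbol," where a safe symbol gives you a canonical single-symbol filling. I expect the key first step is to exploit this to produce, for the fixed point $y^* \in Y$ (say $y^*$ is the constant configuration on symbol $a$), a large family of "markers": finite patterns that agree with $y^*$ outside some bounded window but differ inside, such that these patterns can be placed on a sparse sublattice of $\mathbb{Z}^d$ independently of one another (any region far enough from the window looks like $y^*$, so gluing is automatic via the finite extension property). This is the analogue of the standard one-dimensional "marker" construction and is what replaces the role of the safe symbol in the BPS argument.

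Second, with these markers in hand, I would construct the factor map in the usual two-layer fashion. Since $X$ is block gluing with $h(X) > h(Y)$, a counting/entropy argument (exactly as in the proof of Theorem \ref{theoremBPS2} in \cite{BPS}) lets me first map $X$ onto an SFT $Z$ that encodes a sparse set of marker locations together with, in the complementary region, enough free "address" information; the block gluing property of $X$ is what guarantees $X$ surjects onto this intermediate object, and the strict entropy gap is what leaves room for the addresses. Then, on the complement of the markers, I would use the finite extension property to fill in a valid $Y$-configuration determined by the address data: pick an enumeration of globally admissible $Y$-patterns on large boxes, use the address bits read off from $Z$ to select one, and use the finite extension property to interpolate between adjacent selected patterns through the buffer zones near the markers. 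Continuity and shift-commutativity are automatic because everything is defined by a sliding block code of bounded radius; surjectivity onto $Y$ follows because every $y \in Y$ can be approximated on larger and larger boxes by globally admissible patterns, which are exactly what the address data can name.

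The main obstacle, I expect, is the surjectivity/interpolation step: ensuring that the patterns selected by the address data on adjacent boxes can actually be stitched together into a legal global point of $Y$. With a safe symbol this is trivial — you separate the boxes by a moat of safe symbols. Here I must instead show that the finite extension property, applied iteratively over a bounded-width buffer region around the marker lattice, always succeeds regardless of what the neighboring boxes contain; this requires care that the "extension radius" is uniform and that the order in which one fills successive annuli does not create an obstruction. A secondary technical point is verifying that the intermediate SFT $Z$ is genuinely an SFT and that the entropy bookkeeping goes through with the marker density chosen small enough (depending on $h(X) - h(Y)$ and the extension radius $r$) but still positive. Once those are handled, composing the factor map $X \to Z$ with the block code $Z \to Y$ gives the desired factor map, and the conjugacy-invariance of the finite extension property (proved earlier) is what lets us assume without loss of generality that $Y$ is presented with whatever markup radius the construction needs.
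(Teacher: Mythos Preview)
Your overall shape is in the right spirit, but there are two structural mismatches with the paper's argument and one genuine gap at exactly the point you flag as ``the main obstacle.''

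First, the structural differences. The paper does not pass through an auxiliary SFT $Z$, and the markers live in $X$, not in $Y$. Concretely, one builds (using block gluing of $X$ and the entropy gap) a marker pattern $M \in L(X)$ with controlled self-overlap properties, and uses occurrences of $M$ in $x$ to carve out ``surrounded patterns'' $W \in L_{C_k}(X_P)$; these surrounded patterns carry the address information directly. There is no need to first factor onto an intermediate object. Also, conjugacy-invariance of the finite extension property is \emph{not} used in the proof of this theorem; it is proved separately as a sanity check that the hypothesis is natural, but the construction works with $Y$ as given.

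Second, and more importantly, the interpolation step you worry about is where the real content lies, and your sketch does not supply the mechanism. The issue is this: the finite extension property only tells you that a pattern already extendible to an $\mathcal{F}$-free pattern on its $g$-thickening is globally admissible. It does \emph{not} directly let you fill a hole between two globally admissible pieces. The paper's device is an alternating ``place then trim'' procedure in $2(d+1)$ stages. In each even stage one places new patterns on a set $S_{2i}$ chosen so that the concatenation $v_{2i} = u_{2i-1} s_{2i}$ contains no pattern from $\mathcal{F}$ (this is weaker than $v_{2i} \in L(Y)$, and is checkable locally because the newly placed pieces are pairwise $g$-separated). In the following odd stage one \emph{deletes} the inner $g$-boundary of $V_{2i}$, and the $g$-extension property then guarantees the surviving pattern $u_{2i+1}$ is in $L(Y)$, so the process can continue. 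The geometry is arranged so that the undefined region shrinks by one ``face dimension'' each round: first the bulk of each determined zone, then its codimension-one faces, then codimension-two, and so on, terminating after $d+1$ rounds. Without this alternation, ``fill successive annuli'' does create an obstruction: naively filling an annulus next to an already-filled region can produce a pattern that avoids $\mathcal{F}$ locally but is not globally admissible, and then the next extension step has no hypothesis to invoke. Your proposal names the difficulty but does not resolve it; the place/trim alternation is the missing idea.
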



For a $\mathbb{Z}^d$ SFT defined by a set of forbidden pairs of adjacent letters, an easily verified (but not conjugacy invariant) condition is single-site fillability or SSF (\cite{MP}). For $d=2$, SSF means 
that for any choice of letters $a,b,c,d \in \mathcal{A}$, there exists $e \in \mathcal{A}$ for which the pattern $\begin{smallmatrix} & a & \\ b & e & c\\ & d & \end{smallmatrix}$ contains none of the forbidden adjacent pairs. Using the forbidden adjacencies as the set of forbidden patterns, it is straightforward to check that SSF implies the finite extension property. The following corollary is immediate.

\begin{corollary}\label{maincor}
If $X$ is a block gluing $\mathbb{Z}^2$ subshift, $Y$ is a $\mathbb{Z}^2$ SFT that satisfies single-site fillability and has a fixed point, and $h(X) > h(Y)$, then there exists a factor map from $X$ onto $Y$.
\end{corollary}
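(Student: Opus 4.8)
The plan is to deduce the corollary directly from Theorem~\ref{mainthm}, so the only real task is to check that a $\mathbb{Z}^2$ SFT $Y$ presented by forbidden adjacent pairs and satisfying single-site fillability (SSF) has the finite extension property; once that is in place, all of the hypotheses of Theorem~\ref{mainthm} with $d=2$ ($X$ block gluing, $Y$ an SFT with a fixed point and the finite extension property, $h(X)>h(Y)$) hold, and the factor map is produced by that theorem. For the finite extension property I would take the defining window of $Y$ to be the unit box, so that ``locally admissible'' for a finite pattern just means that it contains no forbidden adjacent pair; the claim is that SSF then supplies a witnessing constant $N$ as small as the window itself.

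The heart of the argument is an elementary site-by-site extension. Given a finite shape $S \subseteq \mathbb{Z}^2$ and a locally admissible pattern $u$ on $S$, enumerate $\mathbb{Z}^2 \setminus S = \{v_1, v_2, \dots\}$ in any order (say by nondecreasing $\ell^\infty$-distance to the origin) and build a configuration $x \in \mathcal{A}^{\mathbb{Z}^2}$ extending $u$ by assigning a symbol at $v_1$, then $v_2$, and so on. When it is time to fill $v_i$, each of its four lattice-neighbours either lies in $S$, or equals some $v_j$ with $j<i$, or equals some $v_j$ with $j>i$; in the first two cases the neighbour already carries a symbol, in the third it does not. Feed the already-assigned neighbour symbols, together with arbitrary symbols of $\mathcal{A}$ in the remaining neighbour slots, into the SSF condition for the cross-shaped pattern centred at $v_i$: SSF returns a symbol $e$ making that cross free of forbidden adjacent pairs, and in particular $e$ is compatible with every already-assigned neighbour of $v_i$. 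Set $x_{v_i}=e$. Carrying this out for all $i$ yields $x$, and every adjacent pair of sites of $\mathbb{Z}^2$ is ``checked'' at the moment the later of the two sites is assigned (or, when both sites lie in $S$, by local admissibility of $u$), so $x$ contains no forbidden adjacent pair, i.e.\ $x \in Y$. Hence every locally admissible pattern on a finite shape extends to a point of $Y$, which is (more than) enough to conclude that $Y$ has the finite extension property relative to the unit-box window.

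I do not expect a genuine obstacle: SSF literally hands over a completion of any prescribed cross-neighbourhood, so, unlike the general finite extension property — where the role of the bounded window $N$ is to exclude ``locally admissible but not globally admissible'' pathologies — here there is nothing to exclude, and the only care needed is bookkeeping of which neighbour slots are filled as each site is processed. The one point to double-check is that this construction matches the precise quantifier structure in the definition of the finite extension property (in particular, any clause about control of the extension along a bounded collar of $S$); since each symbol above is chosen using only the unit neighbourhood of the site being filled, the behaviour near $\partial S$ depends only on the restriction of $u$ to a bounded neighbourhood of $\partial S$, which should supply whatever such control is required. With SSF $\Rightarrow$ finite extension property established, Theorem~\ref{mainthm} applies verbatim and the corollary follows.
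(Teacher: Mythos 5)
Your proposal is correct and follows the same route as the paper: the paper derives Corollary~\ref{maincor} immediately from Theorem~\ref{mainthm} after observing that, with the forbidden adjacencies as the defining forbidden patterns, SSF implies the finite extension property. Your greedy site-by-site filling via the SSF cross is exactly the ``straightforward check'' the paper leaves to the reader, and it correctly establishes the $0$-extension property for $Y$.
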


Corollary~\ref{maincor} can be used to create explicit examples of new subshifts to which our results apply, since there are many nearest-neighbor $\mathbb{Z}^d$ SFTs which have fixed points and satisfy SSF without having a safe symbol. For instance, one can take any alphabet $\mathcal{A}$ with $|\mathcal{A}| \geq 2d+1$, take any non-identity involution $f$ on $\mathcal{A}$, and define $Y$ by the rule that no pair of letters $\{a,f(a)\}$ ($a \in \mathcal{A}$) can be adjacent.

\section*{acknowledgements} 
The authors would like to thank the anonymous referee for making many useful comments, which significantly improved the clarity and presentation of this work.



\section{Definitions}\label{defs}

We begin with some basic geometric definitions for $\mathbb{Z}^d$. Anytime we refer to distance in $\mathbb{Z}^d$, it is with respect to the $\ell_{\infty}$ distance given by $d((v_i)_{i=1}^d, (w_i)_{i=1}^d) = \max_i(|v_i - w_i|)$. 
For sets $A, B \subset \mathbb{Z}^d$, we define $d(A,B) = \min_{a \in A, b \in B} d(a,b)$. For every $k$, we use $C_k$ and $Q_k$ to denote the hypercubes $[0,k-1]^d$ and $[-k,k]^d$ respectively. For any set $S \subset \mathbb{Z}^d$, we define its \textbf{inner $k$-boundary} $\partial_k S$ to be the set of all $t \in S$ within distance $k$ from some $t' \in S^c$.


\begin{definition}
A \textbf{pattern} over a finite alphabet $\mathcal{A}$ is a member of $\mathcal{A}^S$ for some $S \subset \mathbb{Z}^d$, which is said to have \textbf{shape} $S$. We may refer to any pattern with finite shape as a \textbf{finite pattern}.
\end{definition}

We consider patterns to be defined up to translation: if $u \in \mathcal{A}^S$ for a finite $S \subset \mathbb{Z}^d$ and $v \in \mathcal{A}^T$, where $T = S+t$ for some $t \in \mathbb{Z}^d$, then we write $u = v$ to mean that $u(s) = v(s+t)$ for each $s$ in $S$. 

For any patterns $v \in \mathcal{A}^S$ and $w \in \mathcal{A}^T$ with $S \cap T = \varnothing$, we define the concatenation $vw$ to be the pattern in $\mathcal{A}^{S \cup T}$ defined by $(vw)(S) = v$ and $(vw)(T) = w$.

\begin{definition}
For any finite alphabet $\mathcal{A}$, the \textbf{$\mathbb{Z}^d$-shift action} on $\mathcal{A}^{\mathbb{Z}^d}$, denoted by $\{\sigma_t\}_{t \in \mathbb{Z}^d}$, is defined by $(\sigma_t x)(s) = x(s+t)$ for $s,t \in \mathbb{Z}^d$. 
\end{definition}

We always think of $\mathcal{A}^{\mathbb{Z}^d}$ as being endowed with the product discrete topology, with respect to which it is compact. 

\begin{definition}
A \textbf{$\mathbb{Z}^d$ subshift} is a closed subset of $\mathcal{A}^{\mathbb{Z}^d}$ that is invariant under the $\mathbb{Z}^d$-shift action. 
\end{definition}

Any $\mathbb{Z}^d$ subshift inherits a topology from $\mathcal{A}^{\mathbb{Z}^d}$, with respect to which it is compact. Each $\sigma_t$ is a homeomorphism on any $\mathbb{Z}^d$ subshift, and so any $\mathbb{Z}^d$ subshift, when paired with the $\mathbb{Z}^d$-shift action, is a topological dynamical system. 

Any $\mathbb{Z}^d$ subshift can also be defined in terms of forbidden patterns: for any set $\mathcal{F}$ of finite patterns over $\mathcal{A}$, one can define the set 
$$X(\mathcal{F}) := \{x \in \mathcal{A}^{\mathbb{Z}^d} \ : \ x(S) \notin \mathcal{F} \ \text{ for all finite } S \subset \mathbb{Z}^d\}.$$It is well known that any set of the form $X(\mathcal{F})$ is a $\mathbb{Z}^d$ subshift, and all $\mathbb{Z}^d$ subshifts may be presented in this way. 

\begin{definition} 
A \textbf{$\mathbb{Z}^d$ shift of finite type (SFT)} is a $\mathbb{Z}^d$ subshift equal to $X(\mathcal{F})$ for some finite set $\mathcal{F}$ of forbidden finite patterns. 
\end{definition}

\begin{definition} 
The \textbf{language} of a $\mathbb{Z}^d$ subshift $X$, denoted by $L(X)$, is the set of all patterns that appear in elements of $X$. For any $S \subset \mathbb{Z}^d$, let $L_S(X) := L(X) \cap \mathcal{A}^S$, the set of patterns in the language of $X$ with shape $S$. A finite pattern $w$ will be called a \textbf{first offender} for $X$ if it is not in $L(X)$ but every proper subpattern of $w$ belongs to $L(X)$.
\end{definition}

\begin{remark}
We have defined the language of a subshift to include both the finite and infinite patterns that appear in elements of $X$. We adopt this convention for convenience of presentation, despite the fact that many authors do not include infinite patterns in the language.
\end{remark}

\begin{definition}
Suppose $X$ and $Y$ are compact, metrizable spaces. Further suppose that $\mathbb{Z}^d$ acts on each of these spaces by homeomorphisms, with actions denoted by $\sigma$ and $\tau$, respectively.
A (topological) \textbf{factor map} is any continuous surjection $\phi : X \to Y$ such that $\phi \circ \sigma_t = \tau_t \circ \phi$ for each $t \in \mathbb{Z}^d$. In this case, the pair $(Y,\tau)$ is called a \textbf{factor} of $(X,\sigma)$, and we say that $X$ factors onto $Y$. A bijective factor map is called a \textbf{topological conjugacy}.
\end{definition}

For the purposes of this work, we restrict attention to factor maps between subshifts. It is well-known that any factor map $\phi$ between $\mathbb{Z}^d$ subshifts is a so-called sliding block code, i.e., there exists $n \in \mathbb{N}$ so that $x(t + [-n,n]^d)$ uniquely determines $(\phi(x))(t)$ for any $x \in X$ and $t \in \mathbb{Z}^d$; such $n$ is usually called a radius for the sliding block code. (See \cite{LM} for a proof for $d = 1$, which extends to $d > 1$ without changes.) When convenient, for a pattern $w$ with shape $S$, we may use $\phi(w)$ to denote its image under a sliding block code $\phi$ with radius $n$, with shape $S \setminus \partial_n S$.



\begin{definition}\label{entdef}
The \textbf{topological entropy} of a $\mathbb{Z}^d$ subshift $X$ is
$$h(X) := \lim_{n \rightarrow \infty} \frac{1}{n^d} \log | L_{C_n}(X) |.$$
This limit exists by a standard subadditivity argument.
\end{definition}


Finally, let us define the mixing properties for $\mathbb{Z}^d$ subshifts which we will need.



\begin{definition}\label{block}
A $\mathbb{Z}^d$ subshift $X$ is {\bf block gluing} if there exists $g \geq 0$ so that for any hyperrectangles $R, R' \subset \mathbb{Z}^d$ with $d(R, R') > g$ and any $w \in L_R(X)$ and $w' \in L_{R'}(X)$, there exists $x \in X$ with $x(R) = w$ and $x(R') = w'$.
\end{definition}

\begin{definition}\label{safe}
A letter $* \in \mathcal{A}$ is a \textbf{safe symbol} for a $\mathbb{Z}^d$ subshift $X$ if for any point $x \in X$ and any $S \subseteq \mathbb{Z}^d$, changing each letter of $x$ on $S$ to $*$ yields a point in $X$.
\end{definition}


\begin{definition}\label{extprop}
For $g \in \mathbb{N}$, a $\mathbb{Z}^d$ SFT $X$ has the \textbf{$g$-extension property} if there exists a finite set $\mathcal{F}$ of forbidden finite patterns inducing $X$ with the following property: if a pattern $w$ with shape $S$ can be extended to a pattern on $S + Q_g$ which does not contain any patterns from $\mathcal{F}$, then $w \in L(X)$, i.e., it can be extended to a point on all of $\mathbb{Z}^d$ which does not contain any patterns from $\mathcal{F}$. We say that $X$ has the \textbf{finite extension property} if it has the $g$-extension property for some $g$.
\end{definition}

(The reader may check that any $X$ with the $g$-extension property is block gluing at distance $2g$ plus the maximum diameter over $w \in \mathcal{F}$.)

The \textbf{topological strong spatial mixing (TSSM) property} for $\mathbb{Z}^d$ SFTs was introduced in \cite{briceno}, where it was also shown to be equivalent to the existence of only finitely many first offenders for $X$.

\begin{proposition}
A $\mathbb{Z}^d$ SFT $X$ has the TSSM property if and only if it has the $0$-extension property.
\end{proposition}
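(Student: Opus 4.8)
The plan is to prove both implications directly from the characterization of TSSM in terms of first offenders, which the excerpt tells us holds. Recall TSSM is equivalent to: there are only finitely many first offenders for $X$. So I would restate the goal as: $X$ has the $0$-extension property if and only if $X$ has only finitely many first offenders.

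For the easy direction, suppose $X$ has the $0$-extension property, witnessed by a finite forbidden set $\mathcal{F}$. I claim every first offender $w$ has diameter bounded by the maximal diameter of patterns in $\mathcal{F}$ (plus a small constant), so there are only finitely many. Indeed, let $w$ be a first offender with shape $S$; then $w \notin L(X)$, but by the $0$-extension property (with $g=0$, so $S + Q_0 = S$) this forces $w$ to contain some pattern $f \in \mathcal{F}$, say on shape $T \subseteq S$. If $T \neq S$, then $w|_T$ is a proper subpattern of $w$ not in $L(X)$ (since it contains the forbidden pattern $f$, hence lies in no point of $X$), contradicting minimality of $w$. Hence $S = T$ is the shape of a pattern in $\mathcal{F}$, and the number of such shapes — hence such offenders — is finite. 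One has to be slightly careful that $f$ being forbidden really does imply $w|_T \notin L(X)$; this is immediate since no point of $X$ contains any pattern of $\mathcal{F}$.

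For the converse, suppose $X$ has only finitely many first offenders; let $\mathcal{F}_0$ be the (finite) set of first offenders and take $\mathcal{F} = \mathcal{F}_0$ as the presenting forbidden set. (One should first note $X = X(\mathcal{F}_0)$: any pattern not in $L(X)$ contains a first offender — shrink it until every proper subpattern is in the language — and conversely no point of $X$ contains a first offender.) Now suppose a pattern $w$ with shape $S$ extends to $\tilde w$ on $S + Q_0 = S$ containing no pattern of $\mathcal{F}_0$; then $w = \tilde w$ contains no first offender, so $w \notin \mathcal{F}_0$ and more to the point $w \in L(X)$, because if $w \notin L(X)$ it would have to contain a first offender. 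Thus every $\mathcal{F}_0$-admissible pattern is globally admissible, which is exactly the $0$-extension property.

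The main obstacle — really the only substantive point — is verifying that first offenders behave correctly with respect to the language: namely that $X = X(\mathcal{F}_0)$ where $\mathcal{F}_0$ is the set of first offenders, equivalently that a finite pattern fails to be in $L(X)$ precisely when it contains a first offender. This is a compactness argument (a pattern with all finite sub-patterns in $L(X)$ extends to a point of $X$) combined with a finite descent to extract a minimal non-admissible sub-pattern; since the paper has already invoked the first-offender characterization of TSSM from \cite{briceno}, this fact is presumably available, and the rest is the routine bookkeeping above with $g = 0$ making $S + Q_0 = S$.
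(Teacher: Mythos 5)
Your proof is correct and follows essentially the same route as the paper's: both directions reduce to the first-offender characterization of TSSM, with the finite set of first offenders serving as the presenting forbidden list in one direction, and the contrapositive of the $0$-extension property combined with minimality of first offenders in the other. (Your version of the latter direction concludes the slightly sharper fact that every first offender lies in $\mathcal{F}$, where the paper only bounds their diameters, but the underlying argument is identical.)
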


\begin{proof}
Suppose that $X$ has the TSSM property and therefore has only finitely many first offenders. Let $\mathcal{F}$ denote the list of first offenders. We claim that $X$ has the $0$-extension property for $\mathcal{F}$. In fact, $X = X(\mathcal{F})$ and if $w$ is a pattern not in $L(X)$, then $w$ must contain a minimal subpattern not in $L(X)$, which by definition is a first offender.

For the reverse implication, suppose that $X$ has the $0$-extension property for a finite set $\mathcal{F}'$ of forbidden finite patterns of diameter at most $g$. Assume, for the sake of contradiction, that $w$ is a first offender of diameter greater than $g$. Then, $w \notin \mathcal{F}'$ and, by definition of first offender, every proper subpattern of $w$ is in $L(X)$ and so not in $\mathcal{F}'$. Therefore, by the $0$-extension property, $w$ is in $L(X)$, contradicting the assumption that $w$ is a first offender. We conclude that first offenders have bounded diameter, so there must be finitely many of them.
\end{proof}

It is known that the existence of a safe symbol implies TSSM (see \cite{briceno}). Thus we have the following corollary.

\begin{corollary}
If $X$ is a $\mathbb{Z}^d$ SFT with a safe symbol, then $X$ has the $0$-extension property.
\end{corollary}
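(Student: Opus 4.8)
The plan is to combine the chain of implications already assembled in the excerpt rather than argue from scratch. The target statement asserts that a $\mathbb{Z}^d$ SFT $X$ with a safe symbol has the $0$-extension property. The last cited fact before the corollary is that the existence of a safe symbol implies the TSSM property (from \cite{briceno}), and the immediately preceding proposition establishes that TSSM is equivalent to the $0$-extension property. So the entire argument is: safe symbol $\Rightarrow$ TSSM $\Rightarrow$ $0$-extension property. I would simply invoke the earlier proposition with the input ``$X$ has TSSM'' supplied by the cited result.

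In slightly more detail, I would first note that a safe symbol for $X$ forces $X$ to have only finitely many first offenders: this is the characterization of TSSM from \cite{briceno}, so it can be cited directly, but one can also see it concretely, since any pattern that disagrees with a legal pattern only by changing some coordinates to the safe symbol $*$ is legal, which limits how a minimal illegal pattern can look and bounds its size. Having established finiteness of the set of first offenders, hence the TSSM property, I would then apply the forward direction of the preceding proposition verbatim: TSSM implies the $0$-extension property (taking $\mathcal F$ to be the finite list of first offenders, with $X = X(\mathcal F)$, so that any pattern not in $L(X)$ contains a first offender, i.e.\ a pattern of $\mathcal F$). This yields the conclusion.

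Since both links of the chain are already available in the excerpt, there is essentially no obstacle here; the only point requiring a sentence of care is making sure the quantifiers in the definition of the $0$-extension property line up with what TSSM gives, namely that the defining forbidden set $\mathcal F$ may be taken to be exactly the (finite) set of first offenders. This is precisely what the forward direction of the preceding proposition does, so the corollary follows immediately.

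\begin{proof}
By \cite{briceno}, the existence of a safe symbol for $X$ implies that $X$ has the TSSM property. By the preceding proposition, the TSSM property is equivalent to the $0$-extension property. Hence $X$ has the $0$-extension property.
\end{proof}
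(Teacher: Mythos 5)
Your proof is correct and follows exactly the paper's route: the corollary is obtained by chaining the cited fact that a safe symbol implies TSSM with the preceding proposition's equivalence of TSSM and the $0$-extension property. No further comment is needed.
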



As noted in the introduction, the finite extension property is also invariant under topological conjugacy.

\begin{theorem}\label{conjinv}
If $X$ and $Y$ are conjugate $\mathbb{Z}^d$ SFTs and $X$ has the finite extension property, then $Y$ has the finite extension property.
\end{theorem}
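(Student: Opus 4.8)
I would prove this by transporting a witnessing forbidden list for $X$ across the conjugacy, using that both a conjugacy and its inverse are sliding block codes. Let $\phi \colon X \to Y$ be a topological conjugacy (writing $\mathcal{A}_X$, $\mathcal{A}_Y$ for the alphabets of $X$, $Y$). Its inverse $\phi^{-1}\colon Y \to X$ is again a factor map (a continuous bijection between compact metric spaces is a homeomorphism, and $\phi^{-1}$ intertwines the shift actions), hence a sliding block code. Extending the block maps of $\phi$ and $\phi^{-1}$ arbitrarily to all patterns of the relevant cube shape, I obtain sliding block codes $\bar\phi \colon \mathcal{A}_X^{\mathbb{Z}^d} \to \mathcal{A}_Y^{\mathbb{Z}^d}$ and $\bar\psi \colon \mathcal{A}_Y^{\mathbb{Z}^d} \to \mathcal{A}_X^{\mathbb{Z}^d}$ that restrict to $\phi$ and $\phi^{-1}$ on $X$ and $Y$ respectively; let $N$ be a common radius for both. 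Then $\bar\phi\circ\bar\psi$ is a sliding block code of radius $2N$ which equals the identity on $Y$ (though generally not on $\mathcal{A}_Y^{\mathbb{Z}^d}$). Fix a finite set $\mathcal{F}$ of forbidden patterns witnessing the $g$-extension property for $X$.

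First I would build a finite forbidden list for $Y$ encoding the two ``defects'' a point of $\mathcal{A}_Y^{\mathbb{Z}^d}$ can have relative to $Y$. Let $\mathcal{F}_Y^{(1)}$ consist, for each $v' \in \mathcal{F}$ of shape $T$, of every pattern $u$ over $\mathcal{A}_Y$ of shape $T + Q_N$ with $\bar\psi(u)|_T = v'$; and let $\mathcal{F}_Y^{(2)}$ consist of every pattern $u$ over $\mathcal{A}_Y$ of shape $Q_{2N}$ with $\bar\phi(\bar\psi(u))(0) \neq u(0)$. Put $\mathcal{F}_Y := \mathcal{F}_Y^{(1)} \cup \mathcal{F}_Y^{(2)}$, a finite set. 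Using shift-equivariance and locality, one checks directly that a point $v \in \mathcal{A}_Y^{\mathbb{Z}^d}$ avoids $\mathcal{F}_Y^{(1)}$ if and only if $\bar\psi(v)$ avoids $\mathcal{F}$ (i.e.\ $\bar\psi(v) \in X$), and avoids $\mathcal{F}_Y^{(2)}$ if and only if $\bar\phi(\bar\psi(v)) = v$. Hence $\mathcal{F}_Y$ induces $Y$: if $v\in Y$ then $\bar\psi(v)=\phi^{-1}(v)\in X$ and $\bar\phi(\bar\psi(v))=\phi(\phi^{-1}(v))=v$, so $v$ avoids $\mathcal{F}_Y$; conversely, if $v$ avoids $\mathcal{F}_Y$ then $\bar\psi(v)\in X$, so $v=\bar\phi(\bar\psi(v))=\phi(\bar\psi(v))\in Y$.

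Next I would verify the extension property for $\mathcal{F}_Y$ with $g' := g + 2N$. Suppose a pattern $w$ of shape $S$ extends to a pattern $w^+$ of shape $S + Q_{g'}$ avoiding $\mathcal{F}_Y$. A short radius computation, using that $w^+$ avoids $\mathcal{F}_Y^{(1)}$, shows that $\bar\psi(w^+)$ is a well-defined pattern on a shape containing $S + Q_{g'-N} = S + Q_{g+N}$ and that it avoids $\mathcal{F}$ there; in particular $\bar\psi(w^+)|_{S+Q_N}$ extends to the $\mathcal{F}$-avoiding pattern $\bar\psi(w^+)|_{(S+Q_N)+Q_g}$. By the $g$-extension property of $\mathcal{F}$ for $X$, the pattern $\bar\psi(w^+)|_{S+Q_N}$ lies in $L(X)$, so there is $x \in X$ with $x|_{S+Q_N} = \bar\psi(w^+)|_{S+Q_N}$. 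Then $\phi(x) \in Y$, and for $t \in S$ the value $\phi(x)(t) = \bar\phi(x)(t)$ depends only on $x|_{t+Q_N} \subseteq S + Q_N$, so it equals $\bar\phi(\bar\psi(w^+))(t)$; since $w^+$ avoids $\mathcal{F}_Y^{(2)}$ and $t + Q_{2N} \subseteq S + Q_{g'}$, this in turn equals $w^+(t) = w(t)$. Therefore $\phi(x)|_S = w$, so $w \in L(Y)$, and $Y$ has the $g'$-extension property.

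The conceptual content --- conjugacies and their inverses are sliding block codes, so composing them yields near-identity local maps --- is standard. The real work, and the point I expect to be delicate, is the bookkeeping of the previous two paragraphs: choosing the shapes in $\mathcal{F}_Y^{(1)}$ and $\mathcal{F}_Y^{(2)}$ so that this set is genuinely finite and genuinely induces $Y$, and tracking that the region on which one controls the relevant patterns shrinks by only a fixed amount (a function of $N$ alone) when passing from $w^+$ to $\bar\psi(w^+)$ and then to $\bar\phi(\bar\psi(w^+))$.
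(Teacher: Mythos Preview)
Your argument is correct and follows the same strategy as the paper's proof: transport the forbidden list through the sliding block code for $\phi^{-1}$ and enlarge the extension gap by the combined radii of $\phi$ and $\phi^{-1}$ (the paper writes $g+r+s$ where you write $g+2N$). The one notable difference is that the paper uses only your $\mathcal{F}_Y^{(1)}$ and asserts directly both that this list induces $Y$ and that $\phi\bigl((\phi^{-1}(v))|_{T\setminus\partial_g T}\bigr)|_S = w$; your addition of $\mathcal{F}_Y^{(2)}$, forcing $\bar\phi\circ\bar\psi$ to act as the identity at the relevant sites, is a genuine refinement that makes both of these steps rigorous when the extended block maps are applied to patterns not a priori known to lie in $L(Y)$.
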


\begin{proof}
Suppose that $X$ has the $g$-extension property (for forbidden list $\mathcal{F}$) and that $\phi: X \rightarrow Y$ is a conjugacy. Denote by $r$ the radius of $\phi$ and by $s$ the radius of $\phi^{-1}$. Define a list of patterns on $\mathcal{A}_Y$ as follows:
\[
\mathcal{F}' := \{w \in \mathcal{A}_Y^{S + Q_s} \ : \ v \in \mathcal{F}, \textrm{ $v$ has shape $S$, } \phi^{-1}(w) \textrm{ contains } v\}.
\]

Clearly $\mathcal{F}'$ is a finite list of finite patterns, and we claim that it induces the shift of finite type $Y$. 
Indeed, by definition, if $y \in Y$, then $\phi^{-1}(y) \in X$, and therefore $y$ contains no pattern in $\mathcal{F}'$. On the other hand, if $y \in \mathcal{A}_Y^{\mathbb{Z}^d}$ contains no pattern in $\mathcal{F}'$, then the point $x$ defined by $x(t) = \phi^{-1}(y(t+Q_s))$ contains no pattern in $\mathcal{F}$, so is in $X$, and therefore $y = \phi(x)$ is in $Y$.
 
Now assume that a pattern $w \in \mathcal{A}_Y^S$ can be extended to a pattern $v \in \mathcal{A}_Y^{S + Q_{g+r+s}}$ containing no patterns from $\mathcal{F'}$. Then, by definition, $\phi^{-1}(v)$ contains no patterns from $\mathcal{F}$; say that $\phi^{-1}(v)$ has shape $T$, and note that $T \supseteq S + Q_{g+r}$. Then by $g$-extension of $X$, the pattern $(\phi^{-1}(v))(T \setminus \partial_g(T))$ is in $L(X)$. Then obviously 
$\phi((\phi^{-1}(v))(T \setminus \partial_g(T))) \in L(Y)$, and we note that its shape contains $S$. Finally, by definitions of $r$ and $s$, we have $\phi((\phi^{-1}(v))(T \setminus \partial_g(T)))(S) = w$, and so $w \in L(Y)$, completing the proof. 
\end{proof}

\section{Proof of Theorem~\ref{mainthm}}\label{proof}

The overall structure of our proof is similar to previous proofs which used 
mixing properties to construct factor maps onto various shifts (see \cite{BPS} and \cite{desai}). By this, we mean that the proof involves using marker patterns to define ``surrounded patterns'' in points of the domain, which will be used to assign patterns on ``determined zones'' after application of the map. 
Very roughly speaking, given $x \in X$, its image $\phi(x)$ will have patterns on determined zones that depend on corresponding surrounded patterns in $x$, and $\phi(x)$ will look like the fixed point of $Y$ at all sites not near a determined zone. Then we will fill the area between determined zones and the fixed point ``background'' in stages using the $g$-extension property of $Y$. First we give the proof for $d = 2$ in order to present a streamlined argument with illustrations, and then we describe the changes that need to be made for $d > 2$.

To begin the formal proof, choose any $X$ and $Y$ as in the theorem, with alphabets $\mathcal{A}_X$ and $\mathcal{A}_Y$, respectively. We assume without loss of generality that $g \geq 0$ is a gap distance for the block gluing of $X$, that $Y$ has the $g$-extension property for a finite list $\mathcal{F}$ of forbidden finite patterns with diameters less than or equal to $g$, 
and that the fixed point $*^{\mathbb{Z}^2}$ is in $Y$. 

We now construct markers in $X$ following \cite{BPS}, but we repeat some details here to set notation. Let $p > 5g$, and choose a pattern $P \in L_{C_p}(X)$ so that $h(X_P) > h(Y)$, where $X_P$ is the subshift consisting of points of $X$ which do not contain the pattern $P$ (see \cite{QT}). Then define a pattern $Q \in L_{C_q}(X_P)$ (for some $q \in \mathbb{N}$ perhaps much larger than $p$) for which $Q$ cannot overlap itself at any nonzero vector in $Q_{g+p} = [-g-p,g+p]^2$, i.e., for every such vector $t$, there does not exist $x \in \mathcal{A}^{\mathbb{Z}^2}$ for which $x(C_q) = x(C_q + t) = Q$ (see \cite{BPS,desai}). Then use block gluing to create a marker pattern $M \in L_{C_m}(X)$ ($m = 2p + 2g + q$) with $P$ at each corner, $Q$ in the center, and patterns $G_i \in L(X_P)$, $1 \leq i \leq 4$, along each edge, as in the left half of Figure~\ref{TSSMpic1}. Any pattern as in the right half of Figure~\ref{TSSMpic1}, where $W \in L_{C_k}(X_P)$ and each $H_i \in L(X_P)$, $1 \leq i \leq 4$, is called a \textbf{surrounding frame}, whose central occurrence of $W$ is called a \textbf{surrounded pattern}. 
The side length $k$ of the shape of $W$ is for now arbitrary, and will be fixed later. For any surrounding frame $x(t+C_{k+2g+2m})$ in $x$, we refer to the region $t + (g+m) \vec{1} + C_{k+g+m}$ as a \textbf{determined zone} in $\phi(x)$. 

\begin{figure}[ht]
\centering
\includegraphics[scale=0.6125]{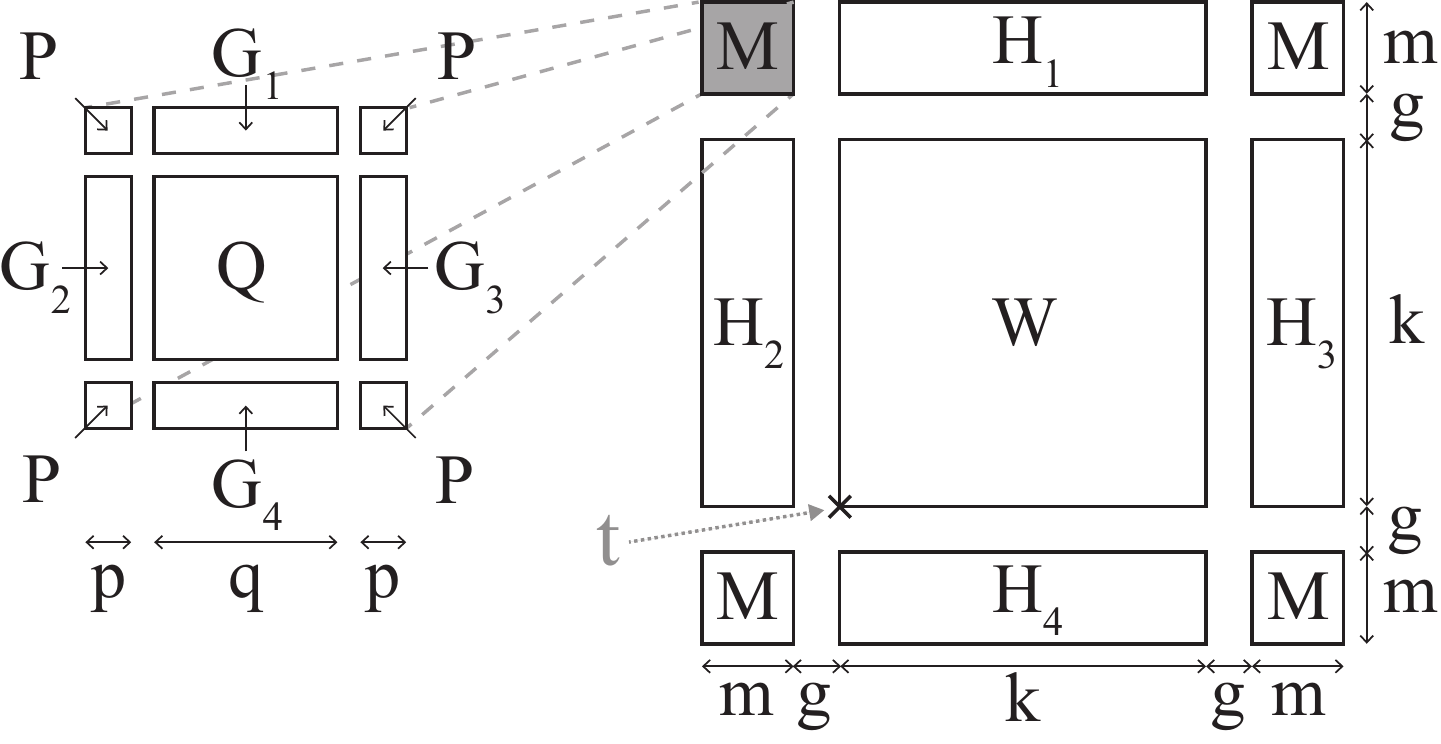}
\caption{A marker pattern (left) and a surrounding frame (right)}
\label{TSSMpic1}
\end{figure}

We need a few simple facts about the locations of determined zones. Firstly, shown exactly as in \cite{BPS} and \cite{desai}, by the marker properties defining $M$, any two determined zones have distance more than $g$ from each other. In fact, for any two determined zones $t_1 + C_{k+g+m}$ and $t_2 + C_{k+g+m}$ with distance exactly $g+1$, the surrounding frames $x(t_1 - (g+m) \vec{1} + C_{k+2g+2m})$ and $x(t_2 - (g+m) \vec{1} + C_{k+2g+2m})$ have overlap consisting of either exactly one occurrence of $M$ or a rectangle with dimensions $m$ and $k + 2g + 2m$ with occurrences of $M$ at the extreme ends. (See Figure~\ref{TSSMpic2}.) In either case, we say that those determined zones are \textbf{adjacent}. We use the term \textbf{component of determined zones} to refer to a maximal connected component with respect to this notion of adjacency. Finally, we claim that if two determined zones $Z_1$ and $Z_2$ are not adjacent, then
\begin{equation} \label{Eqn:Banff}
d(Z_1,Z_2) > 2g+p > 7g.
\end{equation} 

To see this, suppose for a contradiction that two determined zones are separated by distance more than $g$ and less than or equal to $2g + p$. This means that $x$ contains two surrounding frames separated by a vector $t = (t_1, t_2)$ where $k + 2g + m < \max(|t_1|, |t_2|) \leq k + 3g + m + p$, which without loss of generality we can take to be $x(C_{k+3g+m})$ and $x(t + C_{k + 3g + m})$. Our argument will rely only on the general structure of surrounded frames (and not the specific values of $W$ or the $H_i$), and so is unaffected by reflections about horizontal, vertical, or diagonal lines. Therefore, we may assume without loss of generality that $k + 2g + m < t_1 \leq k + 3g + m + p$ and $0 \leq t_2 \leq k + 3g + m + p$. 

We note that if $t_2 \in [0, g+p]$, then $d(t, (k+2g+m,0)) \leq g + p$, meaning that the lower-right copy of $M$ within $x(C_{k+2g+2m})$ and the lower-left copy of $M$ within $x(t + C_{k+3g+m})$ would have separation by a nonzero vector in $Q_{g+p}$. This contradicts the definition of $Q$ and so is impossible. The case $t_2 \in [k+2g+m, k+3g+m+p]$ is also not possible, by a similar argument using the upper-right copy of $M$ within $x(C_{k+2g+2m})$ and the lower-left copy of $M$ within $x(t + C_{k+3g+m})$. Therefore, $t_2 \in (g+p, k+2g+m)$. However, this implies that the lower-left copy of $M$ within $(t + C_{k + 2g + 2m})$ overlaps the pattern $H$ along the right side of $x(C_{k+2g+2m})$ in a rectangle with height at least $p$ and width at least $m - g - p = p + g + q$. This yields a contradiction since $M$ has a copy of $P$ in each corner and $H$ was assumed in $L(X_P)$. We have thus established (\ref{Eqn:Banff}), a fact which will be useful later. \\
\begin{figure}[ht]
\centering
\includegraphics[scale=0.6125]{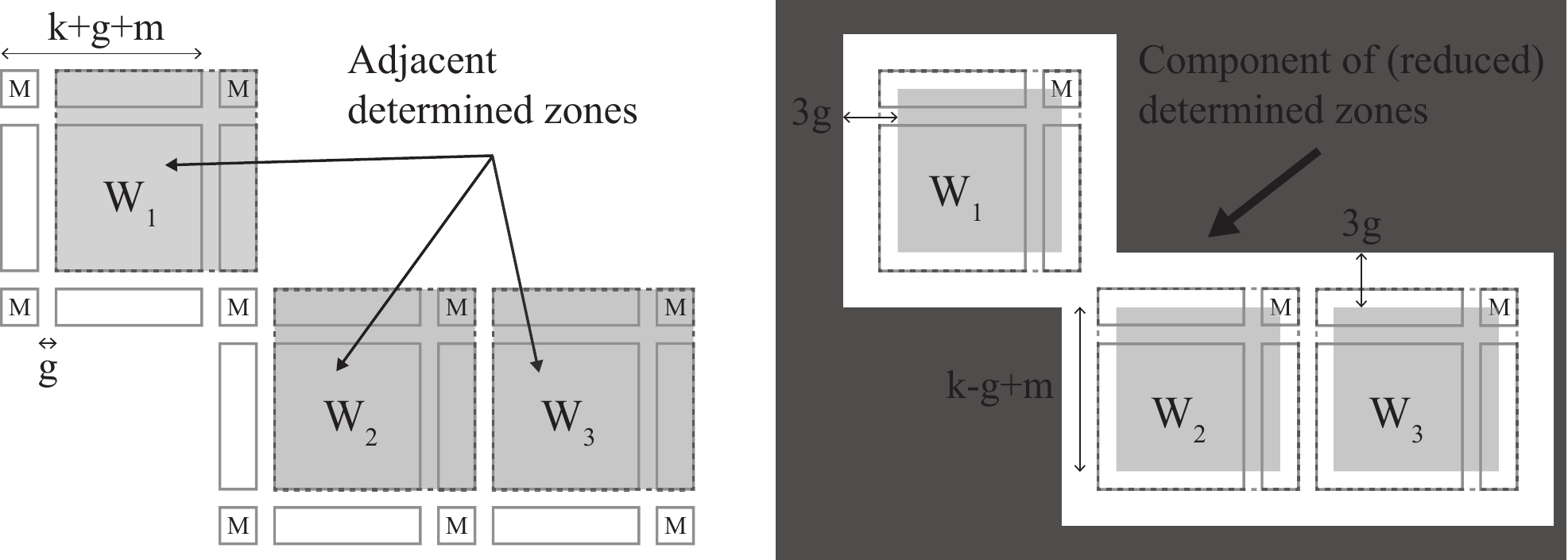}
\caption{Surrounding frames and component of determined zones induce by them (left). Reduced determined zones after Stage 3 in a $*$-background (right).}
\label{TSSMpic2}
\end{figure}
Now let $x$ be in $X$. Informally speaking, $\phi(x)$ will be defined in six alternating stages, determined completely by the surrounded patterns in $x$. After each odd-indexed stage $2i-1$ ($i = 1,2$), $\phi(x)$ will be defined on a set $U_{2i-1}$ as a pattern $u_{2i-1} \in L(Y)$. Then, the following (even-indexed) stage $2i$ will define $\phi(x)$ on a set $S_{2i}$, where $\phi(x)(S_{2i})$ is a pattern $s_{2i}$ for which $v_{2i} = u_{2i-1} s_{2i}$ on $V_{2i} := U_{2i-1} \sqcup S_{2i}$ contains no patterns from $\mathcal{F}$. The following (odd-indexed) stage $2i+1$ will remove all letters on $\partial_g V_{2i}$, yielding a pattern $u_{2i+1}$ on $U_{2i+1} := V_{2i} \setminus \partial_g V_{2i}$. Then $u_{2i+1} \in L(Y)$ by the $g$-extension property, allowing the process to continue. The patterns placed during even-indexed stages are dependent only on nearby surrounded patterns in $x$; to describe this dependency, we require the following auxiliary function.

Since $h(X_P) > h(Y)$, for sufficiently large $k$ it is the case that
\begin{equation}\label{d2bound}
|L_{C_k}(X_P)| > |L_{C_{k+g+m}}(Y)| \cdot |\mathcal{A}_Y|^{12g(k-3g+m) + 196g^2}.
\end{equation}
Fix any such $k$ (which does not depend on $x$), and then define a surjection $\psi$ from $L_{C_k}(X_P)$ to the set of all tuples of the form 
$(i_j)_{1 \leq j \leq 9}$, where $1 \leq i_1 \leq |L_{C_{k+g+m}}(Y)|$, $1 \leq i_j \leq |\mathcal{A}_Y|^{3g(k-3g+m)}$ for $2 \leq j \leq 5$, and $1 \leq i_j \leq |\mathcal{A}_Y|^{49g^2}$ for $6 \leq j \leq 9$. We are now ready to describe the stages of defining the factor map $\phi$.\\

\textbf{Stage 1:} Define $U_1$ to be the set of all $t \in \mathbb{Z}^2$ at a distance of more than $g$ from all determined zones, and define $u_1 = *^{U_1}$. Clearly $u_1 \in L(Y)$ since $*^{\mathbb{Z}^2} \in Y$. We note that after Stage 1, the undefined portion of $\phi(x)$ consists of components of determined zones, along with all sites within distance $g$ of them; we use the term ``island'' to denote the set of sites within distance $g$ of such a component. By (\ref{Eqn:Banff}), any two nonequal islands have distance more than $5g$. 
For any island $I$, and for $i = 1,2$, define $T_i(I)$ to be the sets of $e_1$- and $e_2$-coordinates (respectively) which appear in some determined zone in $I$.\\

\textbf{Stage 2:} For each island $I$, the set $I \cap (T_1(I) \times T_2(I))$ is the disjoint union of the determined zones in the component inducing $I$. Let $S_2 = \bigcup_I (I \cap (T_1(I) \times T_2(I)))$. We define a pattern $s_2$ on $S_2$ as follows. For any determined zone $t + C_{k+g+m}$, by definition $x(t + C_k)$ is a surrounded pattern in $x$. Let the tuple $(i_j)_{1 \leq j \leq 9}$ be defined by $\psi(x(t+C_k)) = (i_j)_{1 \leq j \leq 9}$, and then let $s_2(t+C_{k+g+m})$ be the $i_1$th pattern in $L_{C_{k+g+m}}(Y)$ according to the lexicographic ordering. Then $s_2$ is just the concatenation of these patterns.

We define $V_2 = U_1 \sqcup S_2$ and $v_2 := u_1 s_2$. Each pattern placed on a determined zone was assumed to be in $L(Y)$, and so contained no patterns from $\mathcal{F}$. As noted above, the same is true for the $*$-pattern $u_1$ placed on $U_1$.} Since patterns in $\mathcal{F}$ have diameters less than $g$ and since determined zones have distance greater than $g$ from each other and from $U_1$, $v_2$ contains no patterns from $\mathcal{F}$.\\

\textbf{Stage 3:} Define $U_3 = V_2 \setminus \partial_g V_2$, and $u_3 := v_2(U_3)$. By the $g$-extension property, $u_3 \in L(Y)$. 

To more easily describe future stages, we describe the structure of the set $U_3$. Namely, $U_3$ consists of two types of sites: those at distance more than $2g$ from all determined zones, and those within a determined zone in an island $I$ and for which both coordinates are at distance more than $g$ from the corresponding $T_i(I)^c$. For each island $I$, $U_3 \cap I$ consists of a disjoint union of squares obtained from removing the inner $g$-boundary from each determined zone; we call these squares ``reduced determined zones.'' (See Figure~\ref{TSSMpic2}.) \\

\textbf{Stage 4:} Define $S_4$ to be the set of all sites which are within distance $2g$ of some determined zone in an island $I$, 
have one coordinate which is within distance $g$ of the corresponding $T_i(I)^c$, and one coordinate which has a distance of more than $2g$ from the corresponding $T_i(I)^c$. Informally, $S_4$ is the (disjoint) union of all rectangles with dimensions $3g$ and $k-3g+m$ that share (at least one of) their longest side(s) with a reduced determined zone and are centered along the corresponding side of that reduced determined zone. 
Any two such rectangles are separated by distance greater than $g$; if they're part of the same island then this is true since reduced determined zones have side length greater than $m - g > 2p - g > g$, and if they are part of different islands then this follows from (\ref{Eqn:Banff}).

We now define a pattern $s_4$ on $S_4$. Choose any of the rectangles $R$ comprising $S_4$. First, we need a way to associate a determined zone to $R$; to this end, choose the first direction in the ordering $\{$up, left, down, right$\}$ for which there is a reduced determined zone adjacent to $R$ in that direction, which came from some determined zone. Since $u_3 \in L(Y)$, there exists a pattern on $R$ which yields a pattern in $L(Y)$ when concatenated with $u_3$. However, we need to choose such a pattern on $R$ using only the portion of $x$ which lies within a uniformly bounded distance of $R$ to ensure that $\phi$ is a sliding block code, and if the island $I$ is quite large, then there is no obvious way to do so. Instead, we settle for choosing a pattern on $R$ which creates no patterns from $\mathcal{F}$ when concatenated with $u_3$. That is, consider the collection of patterns $\{w \in (\mathcal{A}_Y)^R :  u_3 w \textrm{ contains no patterns from } \mathcal{F}\}$; note that this collection depends only on the portion of $u_3$ within distance $g$ of $R$. Since $u_3$ was in $L(Y)$, this collection is nonempty, and trivially, it has cardinality bounded from above by $|\mathcal{A}_Y|^{|R|} \leq |\mathcal{A}_Y|^{3g(k-3g+m)}$. 

We then define $s_4(R)$ to be the $i_j$th pattern in this collection according to the lexicographic ordering, where $t + C_{k+g+m}$ was the determined zone associated to $R$ above, $\psi(x(t + C_k)) = (i_j)_{1 \leq j \leq 9}$, and $j$ is taken to be $2$, $3$, $4$, or $5$ based on whether $t + C_{k+g+m}$ is reached by moving up, left, down, or right from $R$. (We adopt the convention, here and later, that for a totally ordered set $S$ and $n > |S|$, the $n$th element of $S$ is just taken to be the maximal element.) We note for future reference that no $i_j$ determines patterns on two different rectangles $R$. Now, $s_4$ is just the concatenation of these patterns.



Define $V_4 = U_3 \sqcup S_4$ and $v_4 = u_3 s_4$. (See Figure~\ref{TSSMpic8}.) No forbidden pattern in $\mathcal{F}$ can intersect two rectangles $R$ since distinct rectangles $R$ are separated by distance more than $g$. No forbidden pattern in $\mathcal{F}$ can intersect exactly one rectangle $R$ since $u_3 s_4(R)$ was assumed not to contain such patterns. Finally, no forbidden pattern in $\mathcal{F}$ can occur disjointly from all rectangles $R$ since $u_3 \in L(Y)$. Therefore, $v_4$ contains no patterns from $\mathcal{F}$.

\begin{figure}[ht]
\centering
\includegraphics[scale=0.6125]{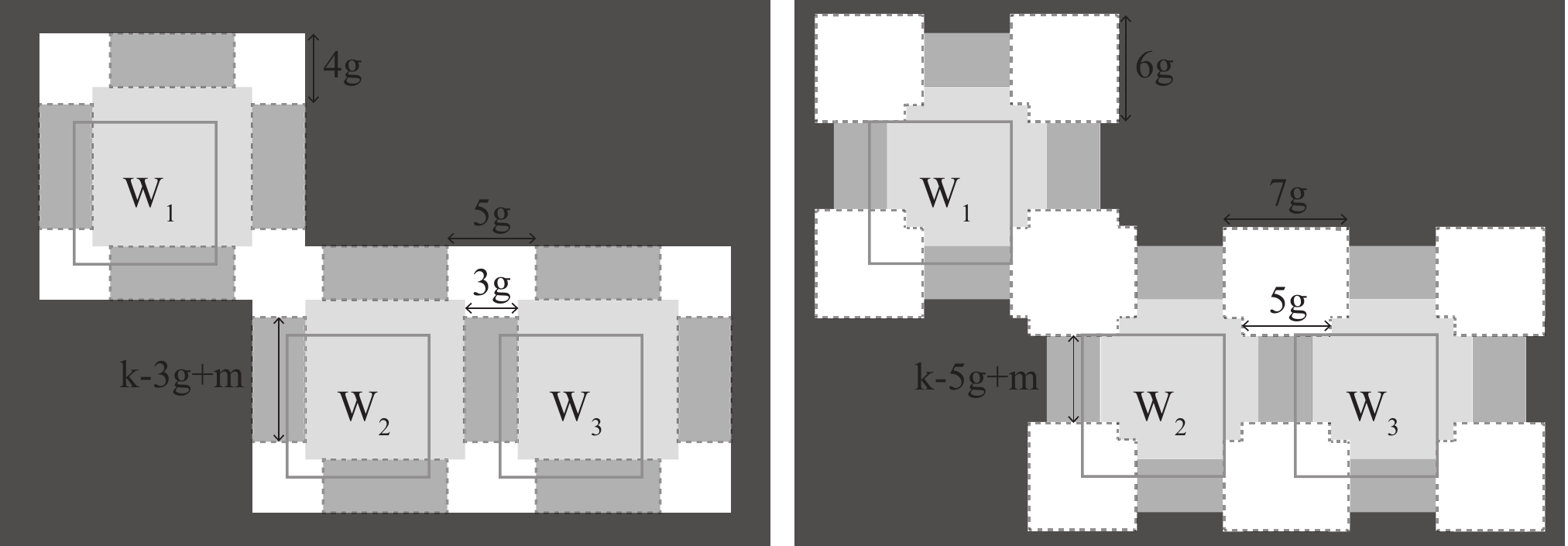}
\caption{Sites assigned during Stages 1,2, and 4 are in dark gray, light gray, and medium gray, respectively (left). Sites assigned during Stage 6 are in white and doubly reduced determined zones in light gray (right).}
\label{TSSMpic8}
\end{figure}

\textbf{Stage 5:} Define $U_5 = V_4 \setminus \partial_g V_4$, and $u_5 := v_4(U_5)$. By the $g$-extension property, $u_5 \in L(Y)$. 
Again we explicitly describe the structure of $U_5$. 
Now, $U_5$ consists of three types of sites. The first are those which are at a distance of more than $3g$ from all determined zones. The second are those which are within distance $3g$ from a determined zone in an island $I$, and for which both coordinates have distance more than $2g$ from the corresponding $T_i(I)^c$. Such sites form a disjoint union of squares obtained by removing the inner $2g$-boundary from all determined zones; we call these ``doubly reduced determined zones.'' 
The third type are those which are within distance $3g$ from a determined zone in an island $I$, have one coordinate within distance $2g$ from the corresponding $T_i(I)^c$, and one coordinate with distance more than $3g$ from the corresponding $T_i(I)^c$.\\


\textbf{Stage 6:} We define $S_6 = U_5^c$. From the description above, it should be clear that the sites in $S_6$ have the following properties: they are within distance $3g$ from a determined zone in an island $I$, have one coordinate within distance $2g$ from the corresponding $T_i(I)^c$, and the other coordinate within distance $3g$ from the corresponding $T_i(I)^c$. By (\ref{Eqn:Banff}), sites in $S_6$ associated to different islands have distance at least $g$. Since doubly reduced determined zones have side length greater than 
$m - 3g > 2p - 3g > 3g$, we see that $S_6$ consists of a disjoint union of connected components with diameters at most $7g$ separated by distance more than $g$, which we call holes. 

We fill the holes with patterns in much the same way as in Stage 4. We again associate a determined zone to each hole $H$; to this end, choose the first direction in the ordering $\{$up-left, up-right, down-left, down-right$\}$ for which there is a doubly reduced determined zone adjacent to $H$ in that direction, which came from some determined zone.

For each hole $H$, consider the collection of patterns 
$\{w \in (\mathcal{A}_Y)^H \ : \ u_5 w \textrm{ contains no patterns from } \mathcal{F}\}$. Since $u_5 \in L(Y)$, this collection is nonempty, and its cardinality is at most $|\mathcal{A}_Y|^{|H|} \leq |\mathcal{A}_Y|^{49g^2}$. 

We define $s_6(H)$ to be the $i_j$th pattern in this collection according to the lexicographic ordering, where 
$t + C_{k+g+m}$ was the determined zone associated to $H$ above, $\psi(x(t + C_k)) = (i_j)_{1 \leq j \leq 9}$, and $j$ is taken to be $6$, $7$, $8$, or $9$ based on whether $t + C_{k+g+m}$ is reached by moving up-left, up-right, down-left, or down-right from $H$. As in Stage 4, no $i_j$ determines patterns on two different holes $H$.


Now, $s_6$ is just the concatenation of these patterns on holes. Define $V_6 = U_5 \sqcup S_6 = \mathbb{Z}^2$ and 
$v_6 = u_5 s_6$. Exactly as in Stage 4, $v_6$ contains no patterns from $\mathcal{F}$, since $u_5$ was in $L(Y)$ and holes are separated by distances of at least $g$. Then $v_6 \in Y$, and so we define $\phi(x) = v_6$.\\

Finally, we must show that $\phi$ is shift-commuting, continuous, and surjective. For shift-commuting and continuity, we claim that $\phi$ is a sliding block code. To see this, we first note that the status of any site $t$ (meaning either its assigned symbol or the fact that no symbol has been assigned) after Stage 1 clearly depends only on whether $t$ is within distance $g$ from a determined zone, which is determined by knowledge of $x$ on sites within distance $k+3g+2m$ from $t$. For any subsequent stage $i$, the status of any site $t$ depends only on the status of sites after stage $i - 1$ within distance $k+3g+2m$ of $t$. Therefore,  $\phi$ is a sliding block code with radius $6(k+3g+2m)$.

The proof that $\phi$ is surjective is quite similar to the ones from \cite{BPS} and \cite{desai}, and so we only outline some slight differences here. Firstly, we only consider $x \in X$ consisting of a lattice of aligned overlapping surrounding frames as in the left-hand side of 
Figure~\ref{TSSMpic7} showing that their $\phi$-images already cover all of $Y$. In that figure, the right-hand side displays the regions of $\phi(x)$, partitioned (by color) by the stage which determined their values. However, since $\psi$ was a surjection and each $i_j$ from any $\psi(x(t + C_k))$ is used at most once, it's clear that for any $y \in Y$, the surrounded patterns $W_i$ on the left can be chosen to yield the desired subpatterns of $y$ on the right, and so $\phi$ is a surjective factor map. This completes the proof of Theorem~\ref{mainthm} for $d=2$. 

\begin{figure}[ht]
\centering
\includegraphics[scale=0.65]{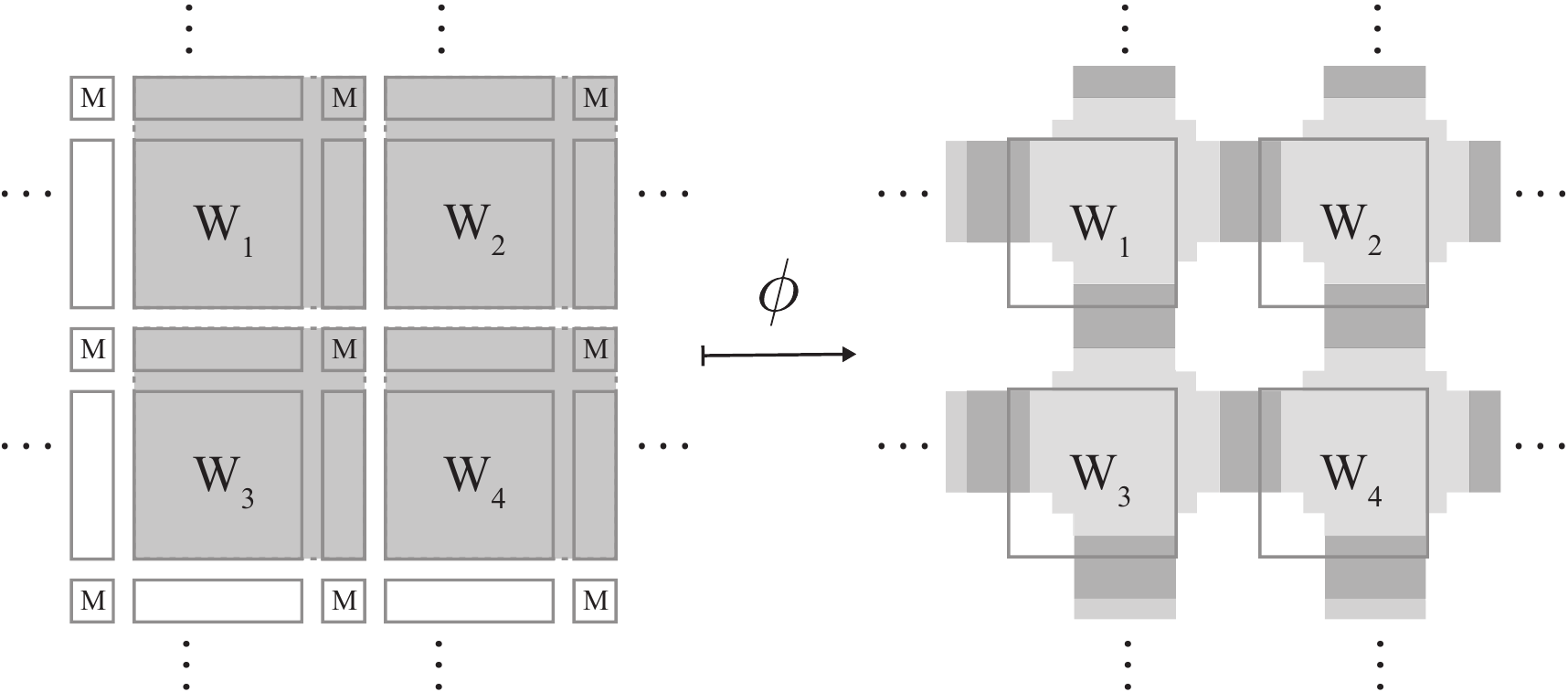}
\caption{An element of $X$ and its image under $\phi$}
\label{TSSMpic7}
\end{figure}

It remains only to describe necessary changes in the proof for $d > 2$. Markers are constructed exactly as before, with shapes which are 
$d$-dimensional hypercubes rather than squares. We choose $p > (2d+1)g$, yielding a version of (\ref{Eqn:Banff}) guaranteeing distance more than 
$(2d+3)g$ between all non-adjacent determined zones. The surjection $\psi$ for $d = 2$ had nine coordinates; one for the determined zones themselves, four for the rectangles placed in Stage 4 along edges, and four for the holes placed in Stage 6 near corners. For $d > 2$, $\psi$ has $3^d$ coordinates, again corresponding to the main bulk of a determined zone plus all its lower-dimensional ``faces.'' This requires a version of (\ref{d2bound}) in which $12g(k-3g+m) + 196g^2$ is replaced by a more complicated polynomial expression $f_d(k,g,m)$ dependent on sizes of the sets $S_i$ (defined below), and bounded from above by $d((k+3g+m+2dg)^d-(k+g+m-2dg)^d)$ ($d$ times the volume difference of two $d$-dimensional hypercubes). This polynomial has degree $d-1$ in $k$, thus the desired inequality still holds for large enough $k$ by definition of entropy.


The definition of $\phi$ proceeds in alternating stages exactly as before; for arbitrary $d$ there will be $2(d+1)$ stages. Again $U_1$ consists of sites which are at distance more than $g$ from all determined zones, and $u_1 = *^{U_1}$. Similarly, $S_2$ consists of the union of all determined zones, and $s_2$ is determined on each determined zone by knowledge of the corresponding surrounded pattern in $x$. Then, for each $j \geq 1$, $V_{2j} = U_{2j-1} \sqcup S_{2j}$ and $U_{2j+1} = V_{2j} \setminus \partial_g V_{2j}$, and so we must only describe the sets $S_{2j}$. For $1 < j \leq d+1$, $S_{2j}$ consists of all sites $t$ with the following properties:
\begin{itemize}
\item $t$ is within distance $jg$ of some determined zone in an island $I$,
\item for all $i < j$, $i$ coordinates of $t$ are within distance $(j-2+i)g$ of the corresponding $T_i(I)^c$, and
\item $d - j + 1$ coordinates of $t$ have distance more than $(2j-2)g$ from the corresponding $T_i(I)^c$.\\
\end{itemize}

We leave it to the reader to check that with this definition, each $S_{2j}$ is disjoint from $U_{2j-1}$, and $V_{2(d+1)} = \mathbb{Z}^d$. The proof that $\phi$ is a  factor map is analogous to the $d = 2$ proof, and the proof that $\phi$ is surjective simply uses $d$-dimensional versions of the points in Figure~\ref{TSSMpic7} (see \cite{BPS} and \cite{desai}); we again leave the details to the reader. 

\bibliographystyle{plain}
\bibliography{TSSM}

\end{document}